\newtheorem{theorem}{Theorem}[section]
\newtheorem{definition}[theorem]{Definition}
\newtheorem{corollary}[theorem]{Corollary}
\newtheorem{lemma}[theorem]{Lemma}
\newtheorem{proposition}[theorem]{Proposition}
\newtheorem{remark}[theorem]{Remark}
\newtheorem{question}[theorem]{Question}
\newtheorem{conjecture}[theorem]{Conjecture}
 \def\part{\partial}
 \newcommand{\GL}{\operatorname{GL}}
\newcommand{\SL}{\operatorname{SL}}
  \newcommand{\Ker}{\operatorname{Ker}}
  \newcommand{\im}{\operatorname{Im}}
\newcommand{\beqn}{\begin{equation}}
\newcommand{\eeqn}{\end{equation}}
\newcommand{\bc}{\mathbb{C}}
\newcommand{\bz}{\mathbb{Z}}
 \newcommand{\cx}{\mathcal{X}}
\newcommand{\U}{\mathcal{U}}
\newcommand{\F}{\mathcal{F}}
\newcommand{\X}{\mathcal{X}}
\newcommand{\E}{\mathcal{E}}
\newcommand{\G}{\mathcal{G}}
\newcommand{\calP}{\mathcal{P}}
\newcommand{\V}{\mathcal{V}}
\newcommand{\calS}{\mathcal{S}}
\begin{document}

\title{An approach towards the Koll\'ar-Peskine problem via the
Instanton Moduli Space}

\author{Shrawan Kumar}

\maketitle

\section{Introduction}\label{sec1}

Koll\'ar and Peskine  (cf. [BC, page 278]) asked the following question on
 complete intersections over the field $\bc$ of complex
 numbers. In this note,  the field $\Bbb C$
 is taken as the
 base field. By a variety, we mean a complex quasiprojective (reduced)
 (but not necessarily irreducible) variety.

\begin{question}\label{q1}
 Let $C_t\subset \Bbb P^3$ be a family of
 {\it smooth} curves parameterized by the formal disc $D:=$ spec $R$, where $R$
  is the formal power series ring $\mathbb{C}[[t]]$ in one variable.
 Assume that the general member of the family is a complete intersection.
  Then, is the special member $C_0$ also a
 complete intersection?
\end{question}

 By using a construction due to Serre, the above
 problem is equivalent to the following (cf. [Ku]).

\begin{question}\label{q2}
 Let $V_t$ be a family of rank two vector
 bundles on $\Bbb P^3$.  Assume that the general member of the family is
 a direct sum of line bundles. Then, is the special member $V_0$ also a
 direct sum of line bundles?
 \end{question}

Let us consider the following  slightly weaker version of
the above question.
\begin{question}\label{q3}
 Let $V_t$ be a family of rank two vector
 bundles on $\Bbb P^3$.  Assume that the general member of the family is
 a trivial vector bundle. Then, is the special member $V_0$ also
 a trivial vector bundle?
 \end{question}

In the next section, we show that the above question is equivalent to a
question on the nonexistence of algebraic maps from $\Bbb P^3$ to the infinite
Grassmannian $\X$ associated to the affine $\SL(2)$. Specifically, we have the
following result (cf. Theorem \ref{2.5}):
\begin{theorem} \label{i2.5} Let $X$ be any irreducible projective variety. Then, the
following two conditions are equivalent:

(a)
Any rank-$2$ vector bundle $\F$  on $X\times D$ with trivial determinant,
such that ${\mathcal{F}}_{| X\times D^*}$ is trivial, is itself trivial.

(b) There exists no nonconstant morphism $X\to \X$.
 \end{theorem}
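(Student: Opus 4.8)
\emph{Plan of proof.} The key tool I would use is the moduli-theoretic description of the infinite Grassmannian: for a $\bc$-scheme $S$, a morphism $S\to\X$ is the same datum as a pair $(\E,\sig)$, where $\E$ is a rank-$2$ vector bundle with trivial determinant on $S\times D$ and $\sig\colon\E_{|S\times D^*}\simto\co^2_{S\times D^*}$ is a trivialization, two pairs being identified when an isomorphism $\E\simto\E'$ intertwines $\sig$ and $\sig'$. Since $X$ is quasi-compact the resulting morphism automatically factors through a finite-dimensional Schubert subvariety, and the comparison between the honest product $X\times D$ and the formal disc along $X\times\{0\}$ is taken care of by Beauville--Laszlo descent. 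I would treat this identification (cf. [Ku]) as the single established input and reduce everything to it.

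Granting the identification, the implication $(b)\Rightarrow(a)$ is immediate. Let $\F$ be as in $(a)$ and choose any trivialization $\sig$ of $\F_{|X\times D^*}$; the pair $(\F,\sig)$ yields a morphism $\phi\colon X\to\X$, which by $(b)$ is constant with some value $x_0=[L]\in\X$. By functoriality a constant morphism is the pullback, along the structure map $X\to\operatorname{Spec}\bc$, of the pair over $\operatorname{Spec}\bc$ attached to $x_0$; that pair lives on $D=\operatorname{Spec}\bc[[t]]$ and its underlying bundle is the lattice $L$, a free $\bc[[t]]$-module of rank $2$. Writing $p_D\colon X\times D\to D$ for the projection, we get $\F\cong p_D^*L\cong\co^2_{X\times D}$, so $\F$ is trivial; the conclusion is independent of the auxiliary choice of $\sig$.

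For $(a)\Rightarrow(b)$ I would argue by contraposition. Suppose $\phi\colon X\to\X$ is nonconstant and let $(\F,\sig)$ be the corresponding pair; it suffices to show $\F$ is nontrivial, which contradicts $(a)$. If $\F$ were trivial, fix a global trivialization $\theta\colon\F\simto\co^2_{X\times D}$; then $g:=\sig\circ(\theta_{|X\times D^*})^{-1}$ is a global automorphism of $\co^2_{X\times D^*}$, that is, an element of $\GL_2\!\big(H^0(X\times D^*,\co)\big)$. Here the hypotheses on $X$ enter decisively: since $X$ is projective and irreducible over the algebraically closed field $\bc$ it is geometrically integral, so $H^0(X,\co_X)=\bc$ and hence, by flat base change, $H^0(X\times D^*,\co)=H^0(X_{\bc((t))},\co)=\bc((t))$. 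After matching the determinant trivializations we thus have $g\in\SL(2,\bc((t)))$, a \emph{constant} loop; the lattice $\sig(\F)=g\cdot\bc[[t]]^2$ it cuts out over each point of $X$ is therefore the same, so $\phi$ is the constant morphism with value $[g]$ --- contradicting nonconstancy.

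The only genuine obstacle is the moduli interpretation itself, together with the Beauville--Laszlo comparison needed to replace the formal disc by $X\times D$ and the quasi-compactness argument guaranteeing that maps from $X$ have bounded pole order. Once these are in hand, both implications reduce to the two elementary facts that every lattice over $\bc[[t]]$ is free and that $H^0(X\times D^*,\co)=\bc((t))$; the latter is precisely the point at which projectivity and irreducibility of $X$ are indispensable, as they are what force every trivialization to differ from a global one by a constant loop.
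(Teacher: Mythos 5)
Your proof is correct, but it routes the work differently from the paper. The paper never invokes the functorial (moduli) description of $\X$; it constructs both directions by hand: from a bundle it builds the morphism $x\mapsto L_x:=H^0(x\times D,\F)$ through the lattice model (Lemma~\ref{2.1}), with well-definedness up to left translation proved by the same constancy observation you use (irreducibility and projectivity of $X$ force two trivializations over $X\times D^*$ to differ by a single element of $\SL(2,\bc((t)))$); from a morphism it builds a bundle by pulling back the universal bundle of [KNR] (Proposition~\ref{2.3} and Lemma~\ref{2.4}); and the crux, Lemma~\ref{2.2} --- a constant morphism forces the bundle to be trivial --- is proved by explicitly extending two sections across $t=0$, using that the transition matrices have determinant one. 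In your write-up all three steps are absorbed into the black box ``morphisms $S\to\X$ correspond bijectively to pairs $(\E,\sigma)$ up to isomorphism'': in particular, the entire content of the paper's Lemma~\ref{2.2} is hidden in the \emph{faithfulness} (injectivity on isomorphism classes) of that correspondence, which is strictly more than the input the paper cites, since [KNR] supplies essentially only the existence direction via the universal bundle. What your route buys is economy and generality: once Beauville--Laszlo representability over a non-affine base is granted, both implications become formal, and the only geometric fact about $X$ needed is $H^0(X\times D^*,\co)=\bc((t))$ --- exactly where irreducibility and projectivity enter in the paper as well. What the paper's route buys is self-containedness relative to its references, at the cost of the hands-on section-extension argument. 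One correction: your citation ``[Ku]'' for the moduli interpretation is wrong --- in this paper [Ku] is N.~Mohan Kumar's article on degenerations of complete intersections; the identification you rely on should be attributed to Beauville--Laszlo, or extracted from [KNR] and [K].
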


 Thus, Question \ref{q3} is equivalent to the following question
(cf. Question \ref{q4}):

 \begin{question}\label{iq4}
 Does there exist no nonconstant morphism $\Bbb P^3\to \X$?
 \end{question}

  Let $\text{Mor}^d_* (\mathbb{P}^1 ,\mathcal{X})$ denote the set of base point
  preserving
morphisms
from $\Bbb P^1 \to \mathcal{X}$ of degree $d$.
It is a
complex algebraic variety. As we show in Section 3, any morphism $\phi: \Bbb P^3\to \X$
of degree $d$,  preserving the base points,
canonically induces
  a morphism
  \[\hat{\phi}:
 \mathbb{C}^3\backslash\{ 0\}
  \to \text{Mor}^d_* (\mathbb{P}^1 ,\mathcal{X}).\]
Let $\mathcal{M}_d$ be the set of
isomorphism classes of rank two vector bundles $\V$ over $\Bbb P^2$ with trivial
 determinant and with second Chern class $d$ together with a
 trivialization of $\V_{|\Bbb P^1}$.  Then, $\mathcal{M}_d$ has a natural
 variety structure, which will be referred to by the {\it Donaldson
moduli space}. Donaldson showed that there is a natural diffeomorphism
between  $\mathcal{M}_d$ and the instanton moduli space
 $\mathcal{I}_d$ of Yang-Mills $d$-instantons over the flat $\Bbb R^4$ with group
$SU(2)$ modulo based gauge equivalence. As shown by Atiyah, there is a natural
embedding
$$i: \text{Mor}^d_* (\mathbb{P}^1 ,\mathcal{X}) \hookrightarrow \mathcal{M}_d$$
as an open subset (cf. Proposition \ref{3.1}). Thus, the morphism $\hat{\phi}$
gives rise to a morphism (still denoted by) $\hat{\phi}: \mathbb{C}^3\backslash
\{ 0\}
  \to \mathcal{M}_d$. Define an action of $\bc^*$ on  $\mathbb{C}^3\backslash
  \{ 0\}$ by homothecy and on
 $\Bbb P^2$ via:
$$ z\cdot [\lambda,\mu, \nu]=  [z^{-1}\lambda,\mu, \nu].$$
This gives rise to an action of  $\Bbb C^*$ on $\mathcal{M}_d$ via the pull-back
of bundles. Then, the embedding
$\hat{\phi}$ is $\Bbb C^*$-equivariant (cf. Theorem \ref{3.2}).

We would like to make the following conjecture (cf. Conjecture \ref{3.3}).

\begin{conjecture}  \label{i3.3} For any $d>0$, there does not exist
any $\Bbb C^*$-equivariant morphism $\hat{f}: \mathbb{C}^3\backslash \{
0\} \to \mathcal{M}_d.$
\end{conjecture}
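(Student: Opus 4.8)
The plan is to translate the statement into the non-existence of a holomorphic section of an associated fibre bundle over $\mathbb{P}^2$, and then to obstruct that section using the $\mathbb{C}^*$-geometry of the instanton moduli space through its ADHM/Nakajima description. First I would record that a $\mathbb{C}^*$-equivariant morphism $\hat f:\mathbb{C}^3\setminus\{0\}\to\mathcal{M}_d$ is precisely the datum of an algebraic section $s:\mathbb{P}^2\to E$ of the fibre bundle
\[
E:=(\mathbb{C}^3\setminus\{0\})\times_{\mathbb{C}^*}\mathcal{M}_d\longrightarrow \mathbb{P}^2,
\]
associated to the homothety principal $\mathbb{C}^*$-bundle $\mathbb{C}^3\setminus\{0\}\to\mathbb{P}^2$ together with the dilation action on $\mathcal{M}_d$. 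I would stress at the outset that the \emph{primary topological obstruction} to such a section vanishes: the instanton moduli space $\mathcal{M}_d\cong\mathcal{I}_d$ is connected and simply connected, so the obstruction in $H^2(\mathbb{P}^2;\pi_1\mathcal{M}_d)$ is zero. Hence the conjecture is a statement of \emph{holomorphic rigidity}, not of pure topology, and any proof must exploit algebraicity in an essential way.

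Next I would make the target $\mathbb{C}^*$-action explicit. Taking $\ell_\infty=\{\lambda=0\}$ as the framing line, one checks that on $\mathbb{A}^2=\mathbb{P}^2\setminus\ell_\infty$ the action $z\cdot[\lambda,\mu,\nu]=[z^{-1}\lambda,\mu,\nu]$ is the standard scalar dilation $(u,v)\mapsto(zu,zv)$ contracting to the origin $[1,0,0]$ and fixing $\ell_\infty$ pointwise. Thus $\mathcal{M}_d$ is identified with Nakajima's locally free locus $M^{\mathrm{reg}}(d)$, open in the semiprojective Gieseker space $M(d)$, with the projective contraction $\pi:M(d)\to M_0(d)$ to the affine Uhlenbeck space on which the dilation acts with strictly positive weights, the unique fixed point being the cone vertex (the empty configuration). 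The crucial preliminary result I would establish is a \textbf{fixed-point lemma}: for $d>0$ the dilation acts on $\mathcal{M}_d$ \emph{without fixed points}. Indeed, a dilation-fixed $[\mathcal{V},\tau]\in\mathcal{M}_d$ restricts on $\mathbb{A}^2$ to a graded (hence homogeneous) rank-$2$ vector bundle; by Quillen--Suslin it is free, and its equivariant structure forces $\mathcal{V}$ to be a split homogeneous bundle $\mathcal{O}(a)\oplus\mathcal{O}(-a)$ of trivial determinant, whose second Chern class $-a^2\le 0$ contradicts $c_2=d>0$.

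With these in hand I would run a limit and extension argument. Composing with $\pi$ gives a $\mathbb{C}^*$-equivariant map $\mathbb{C}^3\setminus\{0\}\to M_0(d)$ into an affine cone; since the puncture has codimension $3$, Hartogs extends it to $\bar g:\mathbb{C}^3\to M_0(d)$ with $\bar g(0)$ equal to the vertex, while $\bar g(\mathbb{C}^3\setminus\{0\})$ avoids the vertex. Equivariance then shows that for each direction $v$ the boundary limit $\lim_{z\to 0}\hat f(zv)$ exists in $M(d)$ and is a dilation fixed point lying, by the fixed-point lemma, in the non-locally-free boundary $M(d)\setminus\mathcal{M}_d$; assembling these gives an algebraic map $\mathbb{P}^2\to \mathrm{Fix}\big(M(d)\big)$ into the (projective) fixed locus of the boundary. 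To convert this into a contradiction I would bring in the Donaldson determinant line bundle $\mathcal{L}$ on $\mathcal{M}_d$ (the descent of the ample generator of $\Pic(\mathcal{X})\cong\mathbb{Z}$), equipped with its natural $\mathbb{C}^*$-linearization: equivariance forces $\hat f^*\mathcal{L}$ to be $\mathcal{O}_{\mathbb{P}^2}(m)$ with $m$ determined by the dilation weight of $\mathcal{L}$, and I would compute this weight by equivariant localization on the ADHM data, aiming to show that the sign dictated by the positivity of the charge $d$ is incompatible with the sign forced by the weights of $\mathcal{L}$ along the boundary fixed locus.

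I expect the main obstacle to be the \emph{non-properness} of $\mathcal{M}_d$, which enters at two points. First, the orbit $z\mapsto\hat f(zv)$ escapes $M(d)$ as $z\to\infty$, so there is no ``sink'' to pair with the source limit, and one cannot simply produce a complete invariant curve inside the quasi-affine $\mathcal{M}_d$; the argument must instead be carried out on the projectivised cone or via the determinant weights. Second, and most seriously, one must rule out the degenerate possibility that $\hat f$ collapses the dilation orbits — i.e. that the induced map to the $\mathbb{C}^*$-quotient is constant — which is exactly the scenario not excluded by the codimension bounds ($\codim$ of the boundary is $\ge 2$, matching the codimension-$3$ locus $\{0\}$ with no numerical clash). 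Pinning down the weight computation so that it yields a genuine inequality, rather than a merely consistent relation, is therefore the crux; I would attack it through the combinatorics of the torus-fixed monomial points of $M(d)$ and the explicit tangent weights of the Nakajima variety there.
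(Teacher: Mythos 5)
The statement you have tried to prove is Conjecture~\ref{3.3} of the paper: it is posed as an open problem, and the paper contains no proof of it --- it only derives consequences from its assumed validity (Corollary~\ref{3.4}). So there is no proof in the paper to measure your argument against, and the real question is whether your proposal itself closes the conjecture. It does not: by your own account the decisive step --- the equivariant weight computation for the determinant line bundle, which is supposed to produce an actual inequality rather than a merely consistent relation --- is deferred (``the crux''), so what you have is a research program, not a proof.

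Moreover, the program as written rests on a misidentification of the $\mathbb{C}^*$-action. You take $\ell_\infty=\{\lambda=0\}$ as the framing line, so that the action becomes the diagonal dilation of the complementary affine plane, fixing the framing line pointwise; this is what underlies your claims that the Uhlenbeck space is a cone whose unique fixed point is the vertex, that the Hartogs extension sends $0$ to that vertex, and that the limit points land in a projective fixed locus. But in the paper the framing line is $\{\nu=0\}$ (the image of $[\lambda,\mu]\mapsto[\lambda,\mu,0]$), which the action preserves only setwise; on the affine chart $\{\nu\neq 0\}$ the action is $(u,v)\mapsto(z^{-1}u,v)$, scaling a single coordinate. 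This is exactly what Lemma~\ref{4.2} records: on monad data the action is $z\cdot(\alpha,\beta,a,b)=(z\alpha,\beta,za,b)$, not a dilation. For this action the fixed locus of the Uhlenbeck space is positive-dimensional (it contains the symmetric powers of the pointwise-fixed line $\{u=0\}$), hence is neither a single vertex nor projective, and your proof of the fixed-point lemma (an equivariant bundle on $\mathbb{A}^2$ is homogeneous, hence splits as $\mathcal{O}(a)\oplus\mathcal{O}(-a)$ with $c_2=-a^2\le 0$) addresses the wrong action. Interestingly, the conclusion of that lemma does survive for the correct action: a fixed point of $\mathcal{M}_d$ would give a weight decomposition $\mathbb{C}^d=\oplus_k V_k$ under which $\alpha$ raises weight by $1$, $\beta$ preserves it, $a$ is supported in weight $-1$ and $b$ in weight $0$, and the injectivity (resp.\ surjectivity) requirement in condition (2) of the definition of $\calS_d$ forces all weights to be $\le -1$ (resp.\ $\ge 0$), which is impossible for $d>0$. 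But you would have to reprove it this way, and then rebuild the entire limit-and-obstruction argument around the correct, non-diagonal action, before the weight computation you identify as the crux can even be set up.
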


Assuming the validity of the above conjecture \ref{i3.3}, we get that there is no
nonconstant  morphism
 $\phi:\mathbb{P}^3 \to \mathcal{X}.$

 Thus, by Theorem \ref{i2.5}, assuming the validity of the above Conjecture \ref{i3.3},
 any rank-$2$ bundle $\F$  on $\mathbb{P}^3 \times D$ with
 trivial determinant,
such that ${\mathcal{F}}_{| \mathbb{P}^3 \times D^*}$ is trivial, is itself
trivial (cf. Corollary \ref{3.4}).

As a generalization of the above, we would like to make the following conjecture
(cf. Conjecture \ref{3.5}).

\begin{conjecture} For any $n \geq 2$, let $\X_n$ be the infinite Grassmannian associated to
the group $G=\SL(n)$, i.e., $\X_n:= \SL(n, K)/\SL(n, R).$ Then,  there
does not exist any
 nonconstant morphism
 $\phi:\mathbb{P}^{n+1}  \to \mathcal{X}_n.$
\end{conjecture}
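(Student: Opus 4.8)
My plan is to argue by contradiction and, following the strategy of Sections~2 and~3 verbatim for $\SL(n)$, to concentrate the whole difficulty into a single equivariant-rigidity statement. Suppose $\phi:\bp^{n+1}\to\X_n$ is nonconstant. Since $\bp^{n+1}$ is irreducible and projective while $\X_n=\SL(n,K)/\SL(n,R)$ is an increasing union of finite-dimensional projective Schubert varieties, $\phi$ factors through a single Schubert variety $X_w\subset\X_n$; as $\Pic(\X_n)\cong\bz$ is generated by an ample line bundle $\cl$, we get $\phi^*\cl\cong\co_{\bp^{n+1}}(d)$ with $d\geq1$ exactly when $\phi$ is nonconstant. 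After translating the target by an element of $\SL(n,K)$ I may assume $\phi$ is base-point preserving, so that restricting $\phi$ to the family of lines through the base point produces, by the construction of Section~3, a morphism $\hat\phi:\bc^{n+1}\backslash\{0\}\to\text{Mor}^d_*(\bp^1,\X_n)$. Composing with the rank-$n$ Atiyah embedding (the analogue of Proposition~\ref{3.1}) into the Donaldson moduli space $\mathcal{M}_d^{(n)}$ of rank-$n$ bundles on $\bp^2$ with trivial determinant, $c_2=d$, and a fixed trivialization along $\bp^1$, and using the homothety action as in Theorem~\ref{3.2}, I obtain a $\bc^*$-equivariant morphism $\hat\phi:\bc^{n+1}\backslash\{0\}\to\mathcal{M}_d^{(n)}$. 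The conjecture then follows from the rank-$n$ form of Conjecture~\ref{i3.3}: for $d>0$ no such equivariant morphism exists.

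Before attacking that, I would confirm that the obstruction cannot be homological, lest I look for it in the wrong place. Under the homotopy equivalence $\X_n\simeq\Om SU(n)$, Bott's generating variety is $\bp^{n-1}$ and $H_*(\Om SU(n);\bz)$ is the polynomial ring on the classes it carries; concretely the minimal nontrivial Schubert variety of $\X_n$ is exactly this $\bp^{n-1}$, so there already is a nonconstant \emph{algebraic} morphism $\bp^{n-1}\hookrightarrow\X_n$ in the neighbouring dimension. Hence the jump from $\bp^{n-1}$ to $\bp^{n+1}$ is invisible to characteristic classes, and the obstruction must be holomorphic in nature. This is precisely why I route the argument through $\mathcal{M}_d^{(n)}$ rather than through the cohomology of $\X_n$.

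The heart of the proof, and the step I expect to be the main obstacle, is the nonexistence of a $\bc^*$-equivariant morphism $\bc^{n+1}\backslash\{0\}\to\mathcal{M}_d^{(n)}$ for $d>0$; here the hypothesis $n+1\geq3$ is essential. Since $\{0\}$ has codimension $\geq2$ in $\bc^{n+1}$, regular functions extend across it, so composing $\hat\phi$ with the morphism to the affine Uhlenbeck space $\mathcal{M}_0^{(n)}$ and applying Hartogs yields a morphism $\bc^{n+1}\to\mathcal{M}_0^{(n)}$ defined at the origin. Each coordinate pulls back to a polynomial in $x_0,\dots,x_n$, and since these coordinates all have the same homothety weight, the coordinate ring is graded with weights of a single sign, so the induced $\bc^*$-action on the image is one-sided; moreover, as the origin is the unique homothety-fixed point, its image $\hat\phi(0)$ is $\bc^*$-fixed, and the equivariant pullback must annihilate every function whose weight has the wrong sign. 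The remaining --- and genuinely hard --- step is to show that the $\bc^*$-fixed loci of $\mathcal{M}_0^{(n)}$, which parameterize ideal instantons concentrated along the fixed locus $\{\lambda=0\}\cup\{[1,0,0]\}$ of the action $z\cdot[\lambda,\mu,\nu]=[z^{-1}\lambda,\mu,\nu]$ on $\bp^2$, together with this one-sidedness of weights, force $d=0$. I expect no short route here, because by the rank-$n$ analogue of Theorem~\ref{i2.5} the assertion is equivalent to the Koll\'ar--Peskine statement that a family of rank-$n$ bundles on $\bp^{n+1}$ with trivial determinant, trivial away from the special fibre, is everywhere trivial. Accordingly I would aim first at a conditional proof: reduce the conjecture to a self-contained claim about the $\bc^*$-fixed points and tangent weights of $\mathcal{M}_d^{(n)}$, thereby isolating exactly the new input that is needed.
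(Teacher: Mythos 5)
You have not proved the statement, and this is not a matter of a fixable slip: the statement is Conjecture \ref{3.5} of the paper, which the paper itself leaves open, so there is no proof on the paper's side to compare against either. What the paper actually does is carry out, for $n=2$ only, exactly the reduction you outline: Theorem \ref{2.5} converts triviality of bundles on $\mathbb{P}^3\times D$ into nonexistence of nonconstant morphisms $\mathbb{P}^3\to\X$, Theorem \ref{3.2} converts such a morphism into a $\bc^*$-equivariant morphism $\bc^3\setminus\{0\}\to\mathcal{M}_d$, and Corollary \ref{3.4} then concludes \emph{conditionally} on Conjecture \ref{3.3}. Your rank-$n$ version of this chain (factoring through a Schubert variety, base-point normalization, the induced map $\hat\phi:\bc^{n+1}\setminus\{0\}\to \text{Mor}^d_*(\mathbb{P}^1,\X_n)$, the Atiyah embedding into the framed moduli space of rank-$n$ bundles on $\mathbb{P}^2$) is a faithful and plausible generalization --- the inputs [KNR] and [A] are stated for general groups --- and it even goes a step beyond the paper, which states Conjecture \ref{3.5} without writing down the rank-$n$ reduction. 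But your argument terminates at precisely the point where the paper's does: the nonexistence of $\bc^*$-equivariant morphisms $\bc^{n+1}\setminus\{0\}\to\mathcal{M}_d^{(n)}$ for $d>0$, which is the rank-$n$ analogue of Conjecture \ref{3.3} and is open. You acknowledge this (``the remaining --- and genuinely hard --- step''), but that step is the entire mathematical content of the conjecture; what you have produced is a conditional reduction, not a proof.

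Two smaller points on your sketch of the hard step. First, the affine variety you propose to map to via Hartogs cannot be ``$\mathcal{M}_0^{(n)}$'' as written --- the moduli space with $c_2=0$ is a point, and mapping to a point yields nothing; you must mean the Uhlenbeck/GIT affinization attached to $\mathcal{M}_d^{(n)}$ (for $n=2$ this is the quotient of $\hat{\calS}_d$ in the paper's Section 4). With that reading, the extension across the origin and the one-sidedness of homothety weights are sensible observations, consistent with the explicit weight structure $z\cdot(\alpha,\beta,a,b)=(z\alpha,\beta,za,b)$ of Lemma \ref{4.2}. Second, your homological remark is correct and worth keeping: the minuscule Schubert variety $\mathbb{P}^{n-1}\subset\X_n$ already admits a nonconstant morphism into $\X_n$, so the conjectured obstruction for $\mathbb{P}^{n+1}$ cannot be detected by characteristic classes and must be genuinely holomorphic. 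Neither point, however, closes the gap: the fixed-point and weight analysis on the Uhlenbeck space that you defer is exactly where a new idea is required, and until it is supplied your proposal proves nothing beyond what the paper already asserts conditionally.
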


Finally, in Section 4, we recall an explicit construction of the moduli
space $\mathcal{M}_d$
via the {\it monad} construction and show that the $\bc^*$-action on $\mathcal{M}_d$
takes a relatively simple form (cf. Lemma \ref{4.2}).

\vskip3ex
\noindent
{\bf Acknowledgements.} It is my pleasure to thank N. Mohan Kumar, who brought
to my attention
the Koll\'ar-Peskine problem and with whom I had several very helpful
conversations/correspondences. I also thank J. Koll\'ar for a 
correspondence. This
work was partially supported by the NSF grant DMS-0901239.

\section{Koll\'ar-Peskine problem and infinite Grassmannian}

For more details on the following construction of the infinite Grassmannians, see
[K, Chapter 13].

 Set
 $
 \mathcal G=\SL(2,K),
 \mathcal P= \SL(2,R),
 $
 where $K:=\Bbb C[[t]][t^{-1}]$ denotes the ring of Laurent series in one variable
 and $R$ is the subring $\bc[[t]]$ of power series.
 The ring homomorphism $R\to\Bbb C,\ t\mapsto
 0$, gives rise to a group homomorphism
 $
 \pi:\mathcal P\to \SL(2,\Bbb C).
 $
 Define $\mathcal B=\pi^{-1}(B)$, where $B \subset
 \SL(2,\Bbb C)$ is the Borel subgroup  consisting of the upper
 triangular matrices.  For any $d\geq 0$, define
 $$
 X_d =
 {\bigcup}^d_{n=0}
 \mathcal B\binom{t^n \,\,0}{0 \,\,\,\, t^{-n}}\mathcal P/\mathcal P\subset
 \mathcal G/\mathcal P.
 $$
 Then, $X_d$ admits a natural structure of a projective
 variety and  $\cup_{d \geq 0} X_d=\mathcal G/\mathcal P$.  Moreover, $X_d$ is
 irreducible (of dimension $d$),
 and  $X_d\hookrightarrow X_{d+1}$ is a closed
 embedding.  In
 particular,  $\mathcal{X}:=\mathcal G/\mathcal P$ is a
 projective  ind-variety.

For any integer $d\geq 0$, consider the set $\mathcal{L}_d $ of $R$-submodules $L
\subset K\otimes_\Bbb C V$ such that
\[t^dL_o\subset L \subset t^{-d}L_o,\,\, \text{and dim}_\Bbb C(L/t^dL_o)=2d,\]
where $V:=\Bbb C^2$ and $L_o:=R\otimes V$. Let
\[\mathcal{L}:=\cup_{d\geq 0}\,\mathcal{L}_d.\]
Any element of $\mathcal{L}$ is called an {\it $R$-lattice} in $K\otimes_\Bbb C V$.

The group $\SL(2,K)$ acts canonically on
$K\otimes_\Bbb C V$. Recall the following from [K, Lemma 13.2.14].

\begin{lemma} \label{2.1} The map $g \SL(2,R)\mapsto gL_o$ (for $g\in \SL(2,K)$) induces
 a bijection $\beta: \mathcal{X}\to \mathcal{L}$.
 \end{lemma}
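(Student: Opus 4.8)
The plan is to check the three ingredients of a bijection---well-definedness on cosets, injectivity, and surjectivity---after first pinning down the stabilizer of the base lattice. The key preliminary is the identity $\{g \in \SL(2,K) : gL_o = L_o\} = \SL(2,R)$. One inclusion is clear, since $\SL(2,R)$ preserves $L_o = R\otimes V = R^2$. For the other, if $g \in \SL(2,K)$ fixes $L_o$ then $g$ and $g^{-1}$ both map $R^2$ into $R^2$, so every entry of $g$ (and of $g^{-1}$) lies in $R$; together with $\det g = 1$ this puts $g$ in $\SL(2,R)$. Because $\SL(2,R)$ fixes $L_o$, the rule $g\mathcal P \mapsto gL_o$ depends only on the coset, so $\beta$ is well-defined; and $gL_o = g'L_o$ precisely when $(g')^{-1}g$ stabilizes $L_o$, i.e. when $g\mathcal P = g'\mathcal P$, which is injectivity.

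Next I would verify that $gL_o$ genuinely lies in $\mathcal{L}$. Clearing denominators in the Laurent-series entries of $g$ and of $g^{-1}$ yields an integer $d \geq 0$ with $t^d g \in M_2(R)$ and $t^d g^{-1} \in M_2(R)$, and these give the sandwich $t^d L_o \subseteq gL_o \subseteq t^{-d}L_o$. It remains to produce the exact dimension $\dim_\bc(gL_o/t^dL_o) = 2d$. For any lattice $L$ with $t^dL_o \subseteq L \subseteq t^{-d}L_o$ one has $\dim_\bc(t^{-d}L_o/t^dL_o) = 4d$, and a relative-index count gives $\dim_\bc(L/t^dL_o) = 2d + \chi(L)$, where $\chi(L) := \dim_\bc(L/(L\cap L_o)) - \dim_\bc(L_o/(L\cap L_o))$ is the index of $L$ relative to $L_o$. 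Writing $g$ in Smith normal form $g = u\,\mathrm{diag}(t^{a_1},t^{a_2})\,w$ with $u,w \in \GL(2,R)$, and using that $\chi$ is invariant under the left action of $\GL(2,R)$, one sees $\chi(gL_o) = -(a_1+a_2) = -v(\det g)$, where $v$ is the $t$-adic valuation. Since $g \in \SL(2,K)$ has $\det g = 1$, we get $v(\det g) = 0$, hence $\chi(gL_o) = 0$ and $\dim_\bc(gL_o/t^dL_o) = 2d$, as required.

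For surjectivity, take any $L \in \mathcal{L}$. Because $R$ is a discrete valuation ring and $L$ is a finitely generated torsion-free $R$-module trapped between the rank-two lattices $t^dL_o$ and $t^{-d}L_o$, it is free of rank two and spans $K\otimes_\bc V$ over $K$. Recording an $R$-basis of $L$ as the columns of a matrix $h \in \GL(2,K)$ gives $hL_o = L$. The same index computation, run in reverse, translates the defining condition $\dim_\bc(L/t^dL_o) = 2d$ into $v(\det h) = 0$, i.e. $\det h \in R^\times$. I would then fix up the determinant without moving the lattice: set $g := h\cdot \mathrm{diag}((\det h)^{-1}, 1)$. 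The right-hand factor lies in $\GL(2,R)$ and so fixes $L_o$, whence $gL_o = hL_o = L$, while $\det g = 1$ forces $g \in \SL(2,K)$. Thus $L = \beta(g\mathcal P)$, and $\beta$ is onto.

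The stabilizer computation and the freeness of $L$ are routine; the genuinely load-bearing step---and the one I expect to carry all the content---is the index identity matching the exact-dimension normalization $\dim_\bc = 2d$ in the definition of $\mathcal{L}$ with the vanishing valuation of $\det g$. This is exactly the bookkeeping that reconciles the $\SL$ normalization on the group side with the balanced-lattice normalization on the lattice side, and it is invoked symmetrically in both the ``image lies in $\mathcal{L}$'' step and the surjectivity step.
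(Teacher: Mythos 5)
Your proof is correct: the stabilizer computation $\{g \in \SL(2,K) : gL_o = L_o\} = \SL(2,R)$ gives well-definedness and injectivity, the Smith normal form over the discrete valuation ring $R$ together with the relative-index identity $\dim_\bc(L/t^dL_o) = 2d + \chi(L)$ correctly reconciles the $\det = 1$ normalization with the $\dim_\bc = 2d$ condition in the definition of $\mathcal{L}$, and the determinant fix-up $g = h\cdot\mathrm{diag}((\det h)^{-1},1)$ settles surjectivity. Note that the paper itself offers no proof of this lemma --- it is recalled verbatim from [K, Lemma 13.2.14] --- so there is no in-paper argument to compare against; your argument is the standard lattice-theoretic proof of that cited result, and it is complete.
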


Let $X$ be any irreducible projective variety and let $\mathcal{F}$ be a
rank two vector bundle on $X\times D$ with trivial determinant, where $D:=
\text{spec }R$. Fix a trivialization
of the determinant of $\F$.
  Assume that
$\mathcal{F}_{| X\times D^*}$ is trivial, $D^*$ being the
punctured formal disc $D^*:= \text{spec }K$. Fix a compatible trivialization $\tau$
of $\mathcal{F}_{| X\times D^*}$ (compatible with the trivialization of the
determinant of $\F$).
For any $x\in X$,
  \[
  H^0 (x\times D, \mathcal{F}) \hookrightarrow H^0(x\times D^*,
  \mathcal{F}) \simeq K\otimes_\Bbb C V.
    \]
Thus,
  \[
L_x := H^0 (x\times D,\mathcal{F}) \hookrightarrow
K\otimes_\Bbb C V.
  \]

It can be seen that $L_x$ is an $R$-lattice in $K\otimes_\Bbb C V$. Moreover, the map
$x \mapsto L_x$ provides a morphism $\phi_\F (\tau): X \to \mathcal{X}$ under the identification
of Lemma \ref{2.1} (depending upon the trivialization $\tau$). If we choose a
different compatible trivialization $\tau'$ of the bundle
$\mathcal{F}_{| X\times D^*}$, it
is easy to see that the morphism $\phi_\F(\tau')$ differs from  $\phi_\F(\tau)$
by the left multiplication of an element $g\in \G$, i.e.,
$$\phi_\F(\tau') (x) = g\phi_\F(\tau)(x), \,\,\,\text{for all}\,\, x\in X.$$
(To prove this, observe that any morphism $X \times D^* \to \SL(2, \bc)$ is constant
in the $X$-variable since  $X$ is an irreducible projective
variety by assumption.)

Set $[\phi_\F]$ as the equivalence class of the map $\phi_\F(\tau):X\to
\mathcal{X}$ (for some compatible trivialization
$\tau$), where two maps $X\to \X$ are called equivalent if they differ by
  left multiplication by an element of
$\G$. Thus,  $[\phi_\F]$ does not depend
upon the choice of the compatible trivialization $\tau$ of
$\mathcal{F}_{| X\times D^*}$.

\begin{lemma} \label{2.2}The bundle $\mathcal{F}$
is trivial on $X\times D$ if and only if the map
$[\phi_F]$ is a constant map.
\end{lemma}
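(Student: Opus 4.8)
The plan is to prove both directions of the equivalence by analyzing when the lattice-valued morphism $\phi_\F$ is constant. The essential observation is that $\mathcal{F}$ is trivial on $X \times D$ precisely when all its fiberwise sections $L_x = H^0(x \times D, \mathcal{F})$ coincide with a single fixed lattice, and the natural candidate for that lattice is $L_o = R \otimes V$ itself.

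\medskip

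\textbf{The ``only if'' direction.} First I would assume $\mathcal{F}$ is trivial on $X \times D$, so $\mathcal{F} \simeq (X \times D) \times V$. Then for each $x$, the space of sections $H^0(x \times D, \mathcal{F})$ is canonically $R \otimes V = L_o$ under the global trivialization. The subtlety is that the trivialization $\tau$ of $\mathcal{F}_{|X \times D^*}$ used to define $\phi_\F$ need not agree with the restriction of the global trivialization of $\mathcal{F}$ on $X \times D$; the two differ by an automorphism of the trivial bundle on $X \times D^*$, i.e.\ by a morphism $X \times D^* \to \SL(2,\bc)$. But, exactly as in the parenthetical remark preceding the lemma, any such morphism is constant in the $X$-variable because $X$ is irreducible and projective, hence the comparison is given by a single element $g \in \G = \SL(2,K)$ independent of $x$. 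Therefore $L_x = g \cdot L_o$ for all $x$, so $\phi_\F(\tau)$ is the constant map with value $g L_o$, and $[\phi_\F]$ is constant.

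\medskip

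\textbf{The ``if'' direction.} Conversely, suppose $[\phi_\F]$ is constant. After adjusting the trivialization $\tau$ by a left multiplication by a suitable element of $\G$ (which changes $\phi_\F(\tau)$ within its equivalence class and hence does not affect the hypothesis), I may assume $L_x = L_o$ for every $x \in X$; that is, $H^0(x \times D, \mathcal{F}) = R \otimes V \subset K \otimes V$ for all $x$. The goal is to upgrade this pointwise equality of lattices into a global trivialization of $\mathcal{F}$ over $X \times D$. The standard two element basis $e_1, e_2$ of $V$ gives two global sections of $\mathcal{F}_{|X \times D^*}$ via $\tau$; the condition $L_x = L_o$ says precisely that these sections extend across $t = 0$ and form a basis of the free $R$-module $H^0(x \times D, \mathcal{F})$ at every $x$. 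I would then argue that these sections generate $\mathcal{F}$ fiberwise and are everywhere linearly independent, so they define a bundle map $(X \times D) \times V \to \mathcal{F}$ that is an isomorphism on each fiber, hence an isomorphism of vector bundles. The compatibility with the fixed trivialization of $\det \mathcal{F}$ ensures this is an $\SL$ rather than $\GL$ trivialization.

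\medskip

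\textbf{Main obstacle.} The delicate step is the ``if'' direction: passing from the fiberwise statement $L_x = L_o$ to a genuine global trivialization. Fiberwise equality of the lattices $H^0(x \times D, \mathcal{F})$ must be promoted to an isomorphism of coherent sheaves in a family, which requires some cohomology and base change control to ensure that the chosen sections vary algebraically in $x$ and that their values span the fiber $\mathcal{F}_{(x,0)}$ at the special point $t = 0$. I expect this to rest on the semicontinuity and base change theorems applied along the projection $X \times D \to X$, using that the pointwise ranks of $H^0(x \times D, \mathcal{F})$ are constant in $x$; the constancy of $[\phi_\F]$ is exactly what delivers this rank constancy.
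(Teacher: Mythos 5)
Your skeleton is the same as the paper's: for the ``if'' direction you normalize $\tau$ so that $L_x = L_o$, extend the constant sections $e_1, e_2$ across $t=0$, and aim to show the resulting frame is everywhere linearly independent; your ``only if'' argument is the same rigidity statement (morphisms $X \times D^* \to \SL(2,\bc)$ are constant in the $X$-variable) that the paper records just before the lemma. The genuine gap is precisely the step you label the main obstacle, and the tool you propose would not close it. Semicontinuity and base change along the projection $X \times D \to X$ do not apply: that morphism is affine, not proper, and $H^0(x \times D, \F)$ is infinite-dimensional over $\bc$ (it is a free $R$-module of rank $2$), so the properness/coherence hypotheses of [H, Chap.~III, \S 12] fail and ``$\dim H^0$'' is not even finite. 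Moreover, the ``rank constancy'' you say the constancy of $[\phi_\F]$ delivers is vacuous: $H^0(x\times D, \F)$ is free of rank $2$ over $R$ for every $x$ and every such $\F$, whether or not $[\phi_\F]$ is constant. What constancy actually gives is the equality of lattices $L_x = L_o$ inside $K \otimes_\bc V$; you state this correctly, but you never convert it into linear independence of the sections at $t=0$.

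Here is how the paper closes that step, and it is the ingredient missing from your proposal. Cover $X$ by open sets $U$ over which $\F_{|U \times D}$ is trivial, with a trivialization $\tau'$ compatible with the fixed trivialization of $\det \F$. Relative to $\tau'$ the sections become maps $\widehat{\sigma}_i : U \times D \to V$, and over $U \times D^*$ the $2\times 2$ matrix $(\widehat{\sigma}_1, \widehat{\sigma}_2)$ is exactly the transition matrix between the two compatible trivializations $\tau$ and $\tau'$, hence has determinant identically $1$ there. Since $\det(\widehat{\sigma}_1, \widehat{\sigma}_2)$ is a regular function on $U \times D$ equal to $1$ on the dense open subset $U \times D^*$, it equals $1$ everywhere, so the frame is independent along $t=0$ as well. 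Note this is where compatibility with $\det\F$ does real work; in your write-up it surfaces only as a closing remark about $\SL$ versus $\GL$. Alternatively, you could have finished from your own fiberwise observation with no base change at all: over the affine scheme $x \times D$, a basis of the free $R$-module $H^0(x \times D, \F)$ is automatically a frame of $\F_{|x\times D}$, hence independent at $(x,0)$; the only genuinely global point is that the extended sections are algebraic on $X \times D$, which holds because the coefficients of negative powers of $t$ vanish at every point of the reduced variety $X$ and hence vanish identically. As written, however, your proposal defers the decisive point to machinery that does not apply here.
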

\begin{proof}  If $\mathcal{F}$ is trivial on $X\times D$, then
$[\phi_\F]$ is clearly a constant map.  Conversely, assume that
$[\phi_\F]$ is a constant map. Choose a compatible trivialization $\tau$ of
$\mathcal{F}_{| X\times D^*}$ so that
  \[
L_x = L_o , \qquad \forall x\in X.
  \]

  Let $\phi:=\phi_\F(\tau)$. Take a basis $\{ e_1,e_2\}$ of $V$.  This gives rise to
unique sections $\sigma_1(x),\sigma_2(x) \in H^0(x\times D,
\mathcal{F})$ corresponding to the elements $1\otimes e_1$ and
 $1\otimes e_2$ respectively under the map $\phi$. Let $s_1,s_2 \in H^0(X\times D^*,
\mathcal{F})$ be everywhere linearly independent sections
such that
$\sigma_i(x)_{|_{x\times D^*}} = {s_i}_{|x\times D^*}.$

It suffices to  show that $\sigma_1(x),\sigma_2(x)$ are linearly
independent at 0 as well.  Take a small  open subset $U\subset X$  so that the bundle
$\mathcal{F}_{|U\times D}$ is trivial. Fix a compatible trivialization $\tau'$ of
 $\mathcal{F}_{|U\times D}$.
Then,
 the sections $\sigma_i$
can be thought of as maps $U\times D\overset{\widehat{\sigma_i}}
{\longrightarrow} V$ which are linearly independent over any point of
 $U\times D^*$. From this it is easy to see that $\widehat{\sigma_i}$ are linearly
 independent over any point of
 $U\times D$ since the transition matrix over
 $U\times D^*$ with respect to the two trivializations $\tau$ and $\tau'$ of
 $\mathcal{F}_{|U\times D^*}$
 has determinant $1$. Covering $X$ by such small open subsets
 $U$, the lemma is proved.
\end{proof}

As above,  a bundle $\mathcal{F}$  gives rise to a morphism $\phi_\F
: X \to \mathcal{X}$ (unique up to the left multiplication by an element of $\G$).
  Conversely, any morphism $\phi
: X \to \mathcal{X}$ gives rise to a bundle $\mathcal{F}$. Before we can prove
 this, we need the following result.

Let $\mathfrak V:= \mathbb P^1 \times V \to \mathbb P^1 $ be the trivial rank-$2$
vector bundle over $\mathbb P^1 $, where $V$ is the two dimensional
complex vector space $\Bbb C^2$. For any $g\in \G$, define a rank-$2$ locally
free sheaf $\mathfrak V_g$
on $ \mathbb P^1 $ as the sheaf associated to the following presheaf:

For any Zariski open subset $U \subset \mathbb P^1 $, set
$$ \mathfrak V_g (U)= H^0(U, \mathfrak V), \,\,\,\text{if}\, 0\notin U,\,\,\text{and}
$$
$$ \mathfrak V_g (U)= \{\sigma\in H^0(U\setminus \{0\}, \mathfrak V):(\sigma)_0\in
g(R\otimes_\bc V)\},  \,\,\,\text{if}\,\, 0\in U,
$$
where $(\sigma)_0$ denotes the germ of the rational section $\sigma$ at $0$ viewed
canonically as an element of $K\otimes_\bc V$.

With this notation, we have the
 following result from [KNR, Proposition 2.8]. (In fact, we only give
 a particular case of loc. cit. for $G=\SL(2, \bc)$ and for the curve $C= \mathbb P^1$,
 which is sufficient for our purposes.):

\begin{proposition} \label{2.3} There is a rank-$2$ algebraic vector bundle $\U$ on
$\cx \times
\mathbb P^1$ satisfying the following:

(1) The bundle $\U$ is of trivial determinant,

(2)  The bundle  $\U$ is trivial
restricted to
$\X \times (\mathbb P^1 \setminus \{0\})$,

(3)  For any $x= g\calP \in \X$ (for $g\in \G$), the restriction
$\U_{|x\times \mathbb P^1}$ is isomorphic
with the locally free sheaf $\mathfrak V_g$ as above.
\end{proposition}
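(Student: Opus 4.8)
The plan is to construct $\U$ by faithfully flat descent along the $\mathcal{P}$-torsor $q:\mathcal{G}\to \X=\mathcal{G}/\mathcal{P}$, after first building a tautological bundle upstairs on $\mathcal{G}\times\mathbb{P}^1$. Cover $\mathbb{P}^1$ by the two opens $\mathbb{P}^1\setminus\{0\}$ and the disc $D=\text{spec }R$ at $0$, with overlap $D^*=\text{spec }K$. On $\mathcal{G}\times\mathbb{P}^1$ I would define $\tilde{\U}$ by gluing the trivial bundle $\mathcal{O}\otimes V$ on $\mathcal{G}\times(\mathbb{P}^1\setminus\{0\})$ to the trivial bundle $\mathcal{O}\otimes V$ on $\mathcal{G}\times D$ along the \emph{tautological} transition datum: over the point $g\in\mathcal{G}$ glue over $D^*$ by the action of $g\in\SL(2,K)$ on $K\otimes V$. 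Equivalently, and more usefully for algebraicity, realize $\tilde{\U}$ as the subsheaf of $(\mathrm{pr}_2)^*\,j_*(\mathcal{O}\otimes V)$, where $j:\mathbb{P}^1\setminus\{0\}\hookrightarrow\mathbb{P}^1$, whose germ along $\mathcal{G}\times\{0\}$ lies in the tautological lattice $gL_o$. By construction the restriction of $\tilde{\U}$ to $g\times\mathbb{P}^1$ is exactly the sheaf $\mathfrak{V}_g$.

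The point that makes descent possible is that right translation $g\mapsto gp$ by $p\in\mathcal{P}=\SL(2,R)$ leaves the lattice unchanged, $gpL_o=gL_o$, since $\mathcal{P}$ stabilizes $L_o$. Hence, viewed inside $j_*(\mathcal{O}\otimes V)$, the fibres $\mathfrak{V}_g$ and $\mathfrak{V}_{gp}$ literally coincide, and the natural pullback of sections furnishes a canonical $\mathcal{P}$-equivariant structure on $\tilde{\U}$, with the cocycle condition immediate. Faithfully flat descent along $q\times\mathrm{id}:\mathcal{G}\times\mathbb{P}^1\to\X\times\mathbb{P}^1$ then produces the desired sheaf $\U$. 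Because $\mathcal{G}$ and $\mathcal{P}$ are only ind-schemes, I would carry this out on the finite approximations $X_d$: realize $X_d$ inside the ordinary Grassmannian of $2d$-dimensional subspaces of $t^{-d}L_o/t^dL_o$, so that the tautological lattice becomes an honest subbundle and descent takes place along a torsor for a finite-dimensional group, yielding a coherent sheaf $\U_d$ on $X_d\times\mathbb{P}^1$. The identifications $\mathfrak{V}_g=\mathfrak{V}_{gp}$ are manifestly compatible with the closed embeddings $X_d\hookrightarrow X_{d+1}$, so the $\U_d$ glue to a sheaf $\U$ on $\X\times\mathbb{P}^1$.

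Finally I would verify the three asserted properties. Local freeness is checked on each $X_d$: the family of lattices is flat (its fibres are the $2d$-dimensional subspaces parameterized by the Grassmannian) and each fibre $\mathfrak{V}_g$ is locally free of rank $2$ on $\mathbb{P}^1$, so $\U_d$ is a rank-$2$ vector bundle. Property (3) holds by construction. For (2), note that the lattice modification is supported along $\X\times\{0\}$, so over $\X\times(\mathbb{P}^1\setminus\{0\})$ the sheaf $\tilde{\U}$ is the trivial bundle carrying the pullback (hence essentially trivial) $\mathcal{P}$-action, whence its descent is trivial. For (1), the transition datum takes values in $\SL(2,K)$, so $\det\tilde{\U}$ is glued by determinant $1$ and is the trivial line bundle; this survives descent, giving $\det\U\cong\mathcal{O}$.

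The hard part will not be the gluing idea but the algebro-geometric bookkeeping in the ind-scheme setting. The naive attempt to glue directly over $\X\times\mathbb{P}^1$ fails because a global transition datum would amount to a section of the nontrivial torsor $\mathcal{G}\to\X$, which does not exist; descent is precisely what repairs this, so the genuine work is exhibiting the tautological lattice as an actual subbundle on each $X_d$ and proving that the descended sheaf is flat over $X_d$ with locally free rank-$2$ fibres rather than merely coherent. Establishing this flatness and local freeness uniformly in $d$, and checking compatibility under $X_d\hookrightarrow X_{d+1}$, is the crux of the argument.
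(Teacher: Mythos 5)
You should first be aware that the paper contains no proof of Proposition \ref{2.3} at all: it is quoted verbatim from [KNR, Proposition 2.8], so the only comparison possible is with that cited construction. Your strategy — a tautological transition datum over $\mathcal{G}\times\mathbb{P}^1$, invariance under $\mathcal{P}$ because $\mathcal{P}$ stabilizes $L_o$ (so $\mathfrak{V}_{gp}=\mathfrak{V}_g$), then passage to the quotient — is indeed the construction behind that reference, and your best move is the reformulation of $\mathcal{U}$ as the subsheaf of $(\mathrm{id}\times j)_*(\mathcal{O}\otimes V)$ cut out by the tautological lattice: since $X_d$ sits inside the finite Grassmannian of $2d$-planes in $t^{-d}L_o/t^dL_o$, the lattice family (the preimage of the tautological subbundle) is defined directly on $X_d$, so with this formulation you do not need fpqc descent at all; the sheaf $\mathcal{U}_d$ exists on $X_d\times\mathbb{P}^1$ by fiat.

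That said, there are two genuine gaps, both at the point you yourself call the crux. First, the claim that over $X_d$ the ``descent takes place along a torsor for a finite-dimensional group'' is false as written: the preimage of $X_d$ in $\mathcal{G}$ is a torsor for $\mathcal{P}=\SL(2,R)$, an infinite-dimensional pro-algebraic group. One can repair this by passing to a congruence quotient (for $N\geq 2d$ the subgroup $\mathcal{P}_N=\Ker\bigl(\SL(2,R)\to\SL(2,R/t^NR)\bigr)$ fixes every lattice $L$ with $t^dL_o\subset L\subset t^{-d}L_o$, since $(1+t^Nm)L\subset L+t^{N-d}L_o\subset L$), but worse, invoking ``faithfully flat descent'' along $\mathcal{G}\to\mathcal{X}$ presupposes flatness, i.e.\ essentially local triviality of this torsor — which is precisely the hard input you never establish. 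Second, your verification of local freeness is not a proof: knowing that each fibre $\mathfrak{V}_g$ is locally free of rank $2$, and that the fibres of the lattice family are the points of a Grassmannian, is a pointwise statement and does not yield local freeness (or even flatness) of the family $\mathcal{U}_d$ over $X_d\times\mathbb{P}^1$. The standard way to close both gaps simultaneously is to prove that $\mathcal{G}\to\mathcal{X}$ admits Zariski-local sections: the big cell (the image of the subgroup of $\SL(2,\mathbb{C}[t^{-1}])$ of elements congruent to the identity mod $t^{-1}$) is open in $\mathcal{X}$ and carries a tautological section, and its $\mathcal{G}$-translates cover $\mathcal{X}$. Over an open set $U_i$ with section $s_i:U_i\to\mathcal{G}$ the tautological lattice family is identified with $s_i\cdot(\mathcal{O}_{U_i}\widehat{\otimes}L_o)$, so $\mathcal{U}_{|U_i\times\mathbb{P}^1}$ is an explicit gluing of trivial rank-$2$ bundles by an $\SL(2)$-valued transition function; local freeness, the triviality over $\mathcal{X}\times(\mathbb{P}^1\setminus\{0\})$, the identification of the fibres with $\mathfrak{V}_g$, and the triviality of the determinant all become immediate, with the patches agreeing on overlaps because $s_j^{-1}s_i$ takes values in $\mathcal{P}$. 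This local-triviality statement is exactly the input on which the cited construction in [KNR] rests, and without it your argument does not get off the ground.
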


\begin{lemma}\label{2.4} For any morphism $\phi
: X \to \mathcal{X}$, there exists a
rank two vector bundle $\F_\phi$ on $X\times D$ with trivial determinant
(explicitly constructed in the proof)
  such  that
${\mathcal{F}_\phi}_{| X\times D^*}$ is trivial and such that the associated
morphism $[\phi_{\F_\phi}] = [\phi ]$.
\end{lemma}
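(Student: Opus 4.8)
The plan is to build $\F_\phi$ by pulling back the universal bundle $\U$ of Proposition \ref{2.3} along $\phi$ and then restricting to the formal disc. Since $X$ is projective, $\phi$ factors through some finite-dimensional stratum $X_d \subset \X$, so that the product morphism
\[
\Phi := \phi \times \mathrm{id}_{\bp^1} : X \times \bp^1 \longrightarrow \X \times \bp^1
\]
lands in an honest finite-dimensional piece and the pullback $\Phi^* \U$ is a genuine rank-two algebraic vector bundle on $X \times \bp^1$. Let $\iota: X \times D \to X \times \bp^1$ be the morphism induced by the canonical map $D = \text{spec }R \to \bp^1$ identifying $D$ with the formal neighbourhood of $0 \in \bp^1$ (here $t$ is the local coordinate at $0$, so $\hat{\mathcal{O}}_{\bp^1,0} = R$). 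I would then define $\F_\phi$ on $X \times D$ by
\[
\F_\phi := \iota^* \Phi^* \U .
\]

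The two easy properties are immediate from Proposition \ref{2.3}. Since $\U$ has trivial determinant (property (1)), so does its pullback $\F_\phi$, and the fixed trivialization of $\det \U$ induces one of $\det \F_\phi$. For triviality over $D^*$: the map $D^* = \text{spec }K \to \bp^1$ factors through $\bp^1 \setminus \{0\}$, so $\iota$ carries $X \times D^*$ into $X \times (\bp^1 \setminus \{0\})$; as $\U$ is trivial on $\X \times (\bp^1 \setminus \{0\})$ (property (2)), the restriction ${\F_\phi}_{|X \times D^*}$ is trivial. I would fix the compatible trivialization $\tau$ of ${\F_\phi}_{|X \times D^*}$ obtained by pulling back the standard trivialization of $\U$ over $\X \times (\bp^1\setminus\{0\})$; this is the $\tau$ used to form the lattices $L_x$.

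The crux is to verify $[\phi_{\F_\phi}] = [\phi]$, which reduces to a pointwise computation of the lattices. Fix $x \in X$ and write $\phi(x) = g\calP$ with $g \in \G$. Because pullback commutes with restriction to a point, the composite $x \times D \to X \times D \xrightarrow{\iota} X \times \bp^1 \xrightarrow{\Phi} \X \times \bp^1$ factors through $\{g\calP\} \times \bp^1$, so
\[
{\F_\phi}_{|x \times D} \;\cong\; \big(\U_{|g\calP \times \bp^1}\big)_{|D} \;\cong\; (\mathfrak V_g)_{|D},
\]
the restriction of the sheaf $\mathfrak V_g$ of Proposition \ref{2.3}(3) to the formal disc at $0$. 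By the very construction of $\mathfrak V_g$, the $R$-module of its sections over $D$ is the lattice $g(R \otimes_\bc V) = gL_o \subset K \otimes_\bc V$, where the embedding into $K \otimes_\bc V$ is induced by the standard trivialization of $\mathfrak V = \bp^1 \times V$ on $\bp^1 \setminus \{0\}$ — which is exactly the trivialization $\tau$ chosen above. Hence
\[
L_x = H^0(x \times D, \F_\phi) = H^0(D, (\mathfrak V_g)_{|D}) = gL_o,
\]
and under the identification $\beta$ of Lemma \ref{2.1} this corresponds to $g\calP = \phi(x)$. Thus $\phi_{\F_\phi}(\tau) = \phi$ for this $\tau$, giving $[\phi_{\F_\phi}] = [\phi]$.

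The \emph{main obstacle} is precisely this last step: one must check that the trivialization $\tau$ inherited from property (2) of $\U$ coincides with the one implicit in the definition of $\mathfrak V_g$ (the standard trivialization of $\mathfrak V$ on $\bp^1 \setminus \{0\}$), so that the two identifications of $H^0(x \times D^*, \F_\phi)$ with $K \otimes_\bc V$ agree; otherwise $L_x$ would be computed inside a different copy of $K \otimes_\bc V$ and could differ from $gL_o$ by a spurious left $\G$-factor. Keeping this compatibility straight — together with the routine base-change check that restricting the pullback to $x \times D$ really yields $(\mathfrak V_g)_{|D}$, which is clean here since everything is locally free over the formal disc — is the only genuinely delicate point.
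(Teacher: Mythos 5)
Your proposal is correct and follows exactly the paper's construction: pull back the universal bundle $\U$ of Proposition \ref{2.3} along $\phi \times \mathrm{id}$, restrict to $X \times D$, and use properties (1)--(3) to verify the three claims. The only difference is that you spell out the lattice computation $L_x = gL_o$ (and the trivialization-compatibility it rests on) which the paper dismisses with ``it is easy to see''; this is a faithful, more detailed rendering of the same argument.
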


\begin{proof} As in  Proposition \ref{2.3}, consider the vector bundle  $\U$ on
$\cx \times
\mathbb P^1$ of rank two. Let $\U_\phi$ be the pull-back of the family $\U$ to $X\times
\Bbb P^1$ via the morphism $\phi \times Id$. Let $\F_\phi$ be the restriction of
$\U_\phi$ to  $X\times D$. Then, by the properties (1)-(2) of Proposition
\ref{2.3}, the bundle $\F_\phi$ satisfies the first two properties of the lemma.
Finally, by the property (3) of Proposition \ref{2.3} and
the definition of the map $[\phi_{\F_\phi}]$, it is easy to see that
$[\phi_{\F_\phi}]
= [\phi ]$.
\end{proof}

Combining  Lemmas \ref{2.2} and \ref{2.4}, we get the following theorem:
\begin{theorem} \label{2.5} Let $X$ be any irreducible projective variety. Then, the
following two conditions are equivalent:

(a)
Any rank-$2$ vector bundle $\F$  on $X\times D$ with trivial determinant,
such that ${\mathcal{F}}_{| X\times D^*}$ is trivial, is itself trivial.

(b) There exists no nonconstant morphism $X\to \X$.
 \end{theorem}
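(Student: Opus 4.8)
The plan is to deduce the equivalence directly from Lemmas \ref{2.2} and \ref{2.4}, establishing each implication separately. Before doing so, I would record the elementary but essential observation that the property of a morphism $X\to\X$ being constant is preserved under the equivalence relation defining $[\phi_\F]$: if $\psi(x)=g\,\phi(x)$ for a fixed $g\in\G$, then $\psi$ is constant precisely when $\phi$ is. Hence it is meaningful to speak of the class $[\phi_\F]$ being (non)constant, and this is exactly the hypothesis appearing in Lemma \ref{2.2}.

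For the implication (b)$\Rightarrow$(a), I would start with an arbitrary rank-$2$ bundle $\F$ on $X\times D$ as in (a), namely with trivial determinant and with $\F_{|X\times D^*}$ trivial. The construction preceding Lemma \ref{2.2} attaches to $\F$ a well-defined class $[\phi_\F]$ of morphisms $X\to\X$. Invoking (b), every morphism $X\to\X$ is constant, so in particular $[\phi_\F]$ is constant; Lemma \ref{2.2} then yields that $\F$ is trivial, which is precisely the conclusion of (a).

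For the converse (a)$\Rightarrow$(b), I would argue by contraposition. Suppose there exists a nonconstant morphism $\phi:X\to\X$. By Lemma \ref{2.4} there is a rank-$2$ bundle $\F_\phi$ on $X\times D$ with trivial determinant, trivial after restriction to $X\times D^*$, and satisfying $[\phi_{\F_\phi}]=[\phi]$. Since $\phi$ is nonconstant, so is the class $[\phi_{\F_\phi}]$, and the contrapositive of the ``only if'' direction of Lemma \ref{2.2} shows that $\F_\phi$ is \emph{not} trivial. Thus $\F_\phi$ satisfies the hypotheses of (a) but violates its conclusion, so (a) fails.

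Since all the substantive geometric input has already been isolated in Lemmas \ref{2.2}, \ref{2.3} and \ref{2.4}, I do not anticipate any genuinely hard step: the theorem is a formal bookkeeping consequence of those lemmas. The only point requiring a little care is the well-definedness just noted—namely that constancy is an invariant of the $\G$-equivalence class—together with checking that the identification $[\phi_{\F_\phi}]=[\phi]$ furnished by Lemma \ref{2.4} correctly transports nonconstancy from $\phi$ to the class $[\phi_{\F_\phi}]$.
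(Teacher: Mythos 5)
Your proposal is correct and follows exactly the paper's route: the paper proves Theorem \ref{2.5} simply by combining Lemmas \ref{2.2} and \ref{2.4}, which is precisely the two-implication argument you spell out (including the contrapositive use of Lemma \ref{2.4} for (a)$\Rightarrow$(b)). Your added remark that constancy is invariant under the left $\G$-action is a worthwhile detail the paper leaves implicit.
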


 By virtue of the above theorem, an affirmative answer of Question \ref{q3}
 is equivalent to an affirmative answer of the
 following question. Observe that under the assumptions of Question \ref{q3}, the
 family $V_t$, thought of as a rank-$2$ vector bundle $\V$ on $\Bbb P^3\times D$, has
 trivial determinant by virtue of [H, Exercise 12.6(b), Chap. III]. Also,
 $\V_{|\Bbb P^3\times D^*}$ is trivial by the semicontinuity theorem (cf. [H, $\S$12,
 Chap. III]).

 \begin{question}\label{q4}
 Does there exist no nonconstant morphism $\Bbb P^3\to \X$?
 \end{question}

\begin{definition} \label{2.6}{\rm
Recall (cf. [K, Proposition 13.2.19 and its proof]) that the singular homology
$H_2(\X, \bz) \simeq \bz$ and it has  a canonical generator given by the Schubert
cycle of complex dimension $1$. For any morphism $\phi: \Bbb P^3 \to \X$, define its
{\it degree} to be the integer
 $d=d_\phi$ such that the induced map in homology $\phi_* : H_2(\Bbb P^3, \bz)
 \to H_2(\X, \bz)$ induced by $\phi$ is given via multiplication by $d$.

 Since the pull-back of the ample generator of Pic $\X \simeq H^2(\X, \bz)$ (which
 is globally generated) is
 a globally generated line bundle on $\Bbb P^3$, $d \geq 0$ and $d=0$ if and only
 if $\phi$ is a constant map.

 For any rank-$2$ bundle $\F$  on $\Bbb P^3\times D$
with trivial determinant such that ${\mathcal{F}}_{| X\times D^*}$ is trivial,
we define its}  deformation index $d(\F)=d_{[\phi_\F]}$.
\end{definition}

 \begin{proposition} \label{2.7}  For any morphism $\phi: \Bbb P^3 \to \X$,
 $d_\phi$ is divisible by $6$.

 Equivalently, for any $\F$ as in the above definition, $d(\F)$ is divisible by
 $6$.
 \end{proposition}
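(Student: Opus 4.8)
The plan is to reduce the statement to a single multiplicative relation in the integral cohomology ring $H^*(\X,\bz)$, pulled back along $\phi$, so the first thing I would do is record that ring. The Schubert cells $X_d$ give $\X$ a cell structure with one cell in each even real dimension, so $H^{2n}(\X,\bz)\cong\bz$ for all $n\geq 0$ and the odd cohomology vanishes; let $\gamma_n$ denote the canonical Schubert generator of $H^{2n}(\X,\bz)$, and put $u:=\gamma_1$, the ample generator of $\Pic\X\cong H^2(\X,\bz)$ dual to the canonical generator of $H_2(\X,\bz)$ used in Definition \ref{2.6}. The one piece of genuine content is the cup-product structure: $H^*(\X,\bz)$ is the divided power algebra on $u$, i.e.
\[
u^n = n!\,\gamma_n, \qquad n\geq 0 .
\]
This can be obtained either from the homotopy equivalence of $\X$ with the based loop group $\Omega SU(2)=\Omega S^3$ (whose Pontryagin homology ring is $\bz[x]$ on a degree-$2$ class, dually forcing divided powers in cohomology), or directly from the Schubert calculus of the affine Grassmannian of $\SL(2)$ as developed in [K, Chapter 13]. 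I expect establishing the precise factor $n!$—i.e.\ the divided power structure, as opposed to the merely rational statement that $u^n$ is a positive multiple of $\gamma_n$—to be the only real obstacle; the remainder is formal.

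Next I would pass to $\bp^3$. Since $H^*(\bp^3,\bz)=\bz[h]/(h^4)$ with $h$ the hyperplane class, and since $\phi_*\colon H_2(\bp^3,\bz)\to H_2(\X,\bz)$ is multiplication by $d=d_\phi$ by definition, dualizing gives $\phi^* u = d\,h$. Applying the ring homomorphism $\phi^*$ to the relation $u^3=6\gamma_3$ then yields, inside $H^6(\bp^3,\bz)=\bz\,h^3$,
\[
d^3\,h^3 = (\phi^* u)^3 = \phi^*(u^3) = 6\,\phi^*\gamma_3 = 6\,c\,h^3,
\]
where $c\in\bz$ is defined by $\phi^*\gamma_3 = c\,h^3$; integrality of $c$ is automatic because $\phi^*$ preserves integral cohomology. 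Hence $d^3=6c$, so $6\mid d^3$, and since $6$ is squarefree this forces $2\mid d$ and $3\mid d$, i.e.\ $6\mid d$.

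A remark on robustness and on where the weight of the argument sits: one could already read off $2\mid d$ from the single lower relation $u^2=2\gamma_2$ (which gives $d^2=2c'$), and the cubic relation $u^3=6\gamma_3$ is precisely what supplies the extra factor $3$. Thus I would not actually need the full loop-space computation, only the first two cup-product structure constants of the Schubert basis; pinning down that these equal $2$ and $6$ exactly is the crux, after which the divisibility is a one-line consequence of the integrality of $\phi^*$. The asserted divisibility of $d(\F)$ is then immediate, since $d(\F)=d_{[\phi_\F]}$ by Definition \ref{2.6}.
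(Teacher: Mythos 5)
Your proof is correct and is essentially the paper's own argument: the paper likewise pulls back the divided-power relation $\epsilon_1^3 = 6\epsilon_3$ in $H^*(\X,\bz)$ (citing [K, Exercise 11.3.E.4] for $\epsilon_i\cdot\epsilon_j = \binom{i+j}{i}\epsilon_{i+j}$, where you instead invoke the loop-space $\Omega S^3$ picture or Schubert calculus) along $\phi^*$ to get $d^3 \in 6\bz$ and concludes by squarefreeness of $6$. Your write-up only makes explicit the final integrality step that the paper leaves as ``from this the proposition follows.''
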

 \begin{proof} Consider the induced algebra homomorphism in cohomology:
 $$\phi^*:H^*(\X, \bz) \to H^*(\Bbb P^3, \bz),$$
 induced by $\phi$. By the definition, the induced map at $H^2$ is
 multiplication by $d_\phi$. Moreover, by [K, Exercise 11.3.E.4], for any $i \geq 0$,
 $H^{2i}(\X, \bz)$ is a free $\bz$-module of rank $1$ generated by the Schubert class
 $\epsilon_i$. Moreover,
 $$ \epsilon_i\cdot \epsilon_j = \binom{i+j}{i}\epsilon_{i+j}.$$
 In particular, $6\epsilon_{3}= \epsilon_{1}^3$. From this the
 proposition follows.
 \end{proof}

 \section{Koll\'ar-Peskine problem and the instanton moduli space}

  Take any morphism $\phi: \Bbb P^3 \to \X$, with degree
 $d=d_\phi$. Assume that
$\phi([0,0,0,1])$ is the base point $x_o:=1\cdot \mathcal{P}\in \mathcal{X}$.

Define the map
 \[\pi: \mathbb{C}^3\backslash\{0\} \times\mathbb{P}^1
\longrightarrow \mathbb{P}^3 ,\,\,
(x, [\lambda,\mu ]) \longmapsto [\lambda x, \mu ],\]
for $x\in  \mathbb{C}^3\backslash\{0\}$ and $[\lambda, \mu]\in \mathbb{P}^1$.
There is an action of   $\mathbb{C}^*$
on $\mathbb{C}^3\backslash \{
0\}\times\mathbb{P}^1$ by
  \[
z \cdot\bigl( x, [\lambda ,\mu ]\bigr) = \Bigl( zx, [
\frac{1}{z}\lambda , \mu]\Bigr),\,\,\text{for}\,\, z\in \Bbb C^*.
  \]
Then, $\pi$ factors through the $\Bbb C^*$-orbits. Consider the composite morphism
\[\bar{\phi}=\phi\circ\pi : \mathbb{C}^3\backslash\{ 0\}
  \times\mathbb{P}^1 \to \mathcal{X}.\]
  Observe that $\bar{\phi}(x, 0)=x_o$ for any $x\in  \mathbb{C}^3\backslash\{ 0\}$,
  where $0\in \Bbb P^1$ is the point $[0,1]$.

  Let $\text{Mor}^d_* (\mathbb{P}^1 ,\mathcal{X})$ denote the set of base point
  preserving
morphisms
from $\Bbb P^1 \to \mathcal{X}$ of degree $d$  (taking $0$ to $x_o$). Then, as in
[A, $\S$ 2], $\text{Mor}^d_* (\mathbb{P}^1 ,\mathcal{X})$ acquires the structutre of a
complex algebraic variety.

The map $\bar{\phi}$ canonically induces
  the morphism
  \[\hat{\phi}:
 \mathbb{C}^3\backslash\{ 0\}
  \to \text{Mor}^d_* (\mathbb{P}^1 ,\mathcal{X}).\]

Let us consider the embedding $\Bbb P^1 \hookrightarrow \Bbb P^2$,
$[\lambda, \mu] \mapsto [\lambda, \mu, 0].$  Fix $d\geq 0$ and let $\mathcal{M}_d$ be the set of
isomorphism classes of rank two vector bundles $\V$ over $\Bbb P^2$ with trivial
 determinant and with second Chern class $d$ together with a
 trivialization of $\V_{|\Bbb P^1}$. The isomorphism is required to
preserve
 the trivialization of $\V$ over $\Bbb P^1$. Then, $\mathcal{M}_d$ has a natural
 variety structure. Moreover, any bundle $\V\in  \mathcal{M}_d$ is semistable.
 (By [OSS, Chapter I, Lemma 3.2.2],
   $\V$ is trivial on generic lines $\ell \subset \Bbb P^2$. Thus, by
   [OSS, Chapter II,
   Lemma 2.2.1], $\V$ is semistable.)  We will refer to  $\mathcal{M}_d$ as the {\it Donaldson
moduli space}. Donaldson [D] showed that there is a natural diffeomorphism
between  $\mathcal{M}_d$ and the instanton moduli space
 $\mathcal{I}_d$ of Yang-Mills $d$-instantons over the flat $\Bbb R^4$ with group
$SU(2)$ modulo based gauge equivalence.

Define an action of  $\Bbb C^*$ on
$\text{Mor}^d_* (\mathbb{P}^1 ,\mathcal{X})$ via:
\begin{equation}\label{e1} (z\cdot\gamma)[\lambda,\mu]=\gamma[z\lambda,\mu],
\end{equation}
for $z\in \Bbb C^*$, $\gamma \in \text{Mor}^d_* (\mathbb{P}^1 ,\mathcal{X})$
and $[\lambda,\mu]\in \Bbb P^1$.

Also, define the  action of  $\Bbb C^*$ on  $\Bbb P^2$ via:
\begin{equation}\label{e2} z\cdot [\lambda,\mu, \nu]=  [z^{-1}\lambda,\mu, \nu].
\end{equation}
This gives rise to an action of  $\Bbb C^*$ on $\mathcal{M}_d$ via the pull-back
of bundles, i.e., for $\V\in \mathcal{M}_d, [X]\in \Bbb P^2$, the fiber of $z\cdot \V$ over
$[X]$ is given by:
 \begin{equation}\label{e3} (z\cdot \V)_{[X]}=  \V_{z\cdot [X]}.
\end{equation}(Observe that  $\Bbb P^1 \hookrightarrow \Bbb P^2$ is stable
under $\Bbb C^*$ and hence the trivialization of $\V_{|\Bbb P^1}$ pulls back to a
trivialization.)

Recall the following result from [A, $\S$ 2].
\begin{proposition} \label{3.1} There is a natural embedding
$$i: \text{Mor}^d_* (\mathbb{P}^1 ,\mathcal{X}) \hookrightarrow \mathcal{M}_d$$
as an open subset. Moreover, $i$ is $\Bbb C^*$-equivariant with respect to the
$\Bbb C^*$ actions as in equations \eqref{e1} and \eqref{e3}.
\end{proposition}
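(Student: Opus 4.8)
## Proof Proposal for Proposition 3.1

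The plan is to construct the embedding $i$ explicitly and then verify both its defining properties (openness of the image, $\bc^*$-equivariance) using the dictionary between lattices, bundles on $\bp^1$, and bundles on $\bp^2$ that has been set up in Section 2. Given a based morphism $\gamma \in \text{Mor}^d_*(\bp^1, \X)$, I would first produce a rank-$2$ bundle on $\bp^1$ by the same mechanism already in play: apply Proposition \ref{2.3} to pull back the universal bundle $\U$ on $\X \times \bp^1$ along $\gamma \times \text{Id}$, obtaining a bundle on $\bp^1 \times \bp^1$ whose restriction to the second factor realizes the lattice data. The point of Atiyah's construction, however, is to encode the morphism $\gamma$ as a bundle on $\bp^2$, not on $\bp^1 \times \bp^1$; so the real first step is to see $\gamma$ as specifying a framed bundle $\V$ on $\bp^2$ by interpreting the values $\gamma[\lambda,\mu] \in \X \cong \mathcal{L}$ as a family of lattices that glue. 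Concretely, the standard Atiyah picture is that a based degree-$d$ map $\bp^1 \to \X = \Omega SU(2)$ (rational loops) corresponds, under the loop-group/holomorphic-bundle correspondence, to a bundle on $\bp^2 = \bp^1 \times \bp^1 / \!\sim$ or rather on the blow-down, framed along the line $\bp^1 = \{\nu = 0\}$. I would make this precise by defining $\V$ to be the bundle on $\bp^2$ whose sheaf of sections near the framing line is cut out by the lattice $\gamma[\lambda,\mu]$ in exactly the manner of the presheaf defining $\mathfrak{V}_g$ in the paragraph preceding Proposition \ref{2.3}.

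Second, I would verify that the resulting $\V$ lands in $\mathcal{M}_d$: it has trivial determinant because $\U$ does (property (1) of Proposition \ref{2.3}), it carries a canonical trivialization along $\bp^1$ because $\gamma$ is base-point preserving (so the lattice is $L_o$ along the relevant locus, giving triviality by property (2)), and its second Chern class equals $d$ because the degree of $\gamma$ is measured by the generator of $H_2(\X, \bz)$, which matches $c_2$ under the correspondence. The assignment $\gamma \mapsto \V$ is then the map $i$, and injectivity follows because the bundle together with its framing reconstructs the lattice family, hence $\gamma$, uniquely.

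Third, for the $\bc^*$-equivariance, I would trace through how the action \eqref{e1} on $\gamma$, namely $(z\cdot\gamma)[\lambda,\mu] = \gamma[z\lambda,\mu]$, transports to the bundle. Reparametrizing the source coordinate $[\lambda,\mu]$ by $[\lambda,\mu] \mapsto [z\lambda,\mu]$ corresponds precisely to the coordinate action $z\cdot[\lambda,\mu,\nu] = [z^{-1}\lambda,\mu,\nu]$ on $\bp^2$ in \eqref{e2} when restricted to the embedded $\bp^1 = \{\nu = 0\}$ and extended to the second ruling, so that the pulled-back bundle $z\cdot\V$ defined by \eqref{e3} is exactly $i(z\cdot\gamma)$. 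This is a direct check once the coordinate conventions are aligned, which is why I would set up the identification $\bp^1 \hookrightarrow \bp^2$, $[\lambda,\mu]\mapsto[\lambda,\mu,0]$, at the very start to make the two $\bc^*$-actions literally the same reparametrization.

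The main obstacle I anticipate is establishing that the image of $i$ is \emph{open} in $\mathcal{M}_d$, rather than just a locally closed or constructible subset. The morphisms $\gamma: \bp^1 \to \X$ correspond exactly to those framed bundles $\V$ on $\bp^2$ whose restriction to the second ruling $\{\lambda = \text{const}\}$ — or equivalently to a generic line through the framing — is trivial, i.e. to bundles that are trivial on a full pencil of lines and not merely on the single framing line; this is the condition that the bundle "comes from a loop" with no jumping along that family. Openness therefore amounts to a semicontinuity statement: the locus of bundles trivial on the relevant lines is open because triviality of a rank-$2$ bundle on $\bp^1$ (versus a nontrivial splitting type $\co(k)\oplus\co(-k)$ with $k>0$) is an open condition by the semicontinuity of $h^0(\co(-1)\otimes \V_{|\ell})$. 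I would prove openness by exhibiting $\text{Mor}^d_*(\bp^1,\X)$ as the complement in $\mathcal{M}_d$ of the closed jumping locus and would lean on Atiyah's argument in [A, $\S 2$] for the precise identification of which framed bundles arise, treating the careful matching of his analytic loop-space description with our algebraic lattice description as the delicate point to get right.
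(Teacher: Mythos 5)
First, a point about the comparison itself: the paper does not prove Proposition \ref{3.1} at all --- it is recalled verbatim from Atiyah [A, \S 2] --- so there is no ``paper's proof'' to measure your argument against; your proposal has to stand on its own as a reconstruction of Atiyah's result. The constructive parts of your plan (producing a framed bundle from the family of lattices $\gamma[\lambda,\mu]$ via the $\mathfrak{V}_g$-type gluing, checking trivial determinant, $c_2=d$, injectivity from the framing, and the coordinate check of $\mathbb{C}^*$-equivariance) follow the standard loop-group/framed-bundle dictionary and are fine in outline, though your geometric bridge is misstated: $\mathbb{P}^2$ is neither a quotient nor a blow-down of $\mathbb{P}^1\times\mathbb{P}^1$; one must blow up the intersection point of the two framing lines and then blow down the proper transforms of the two rulings through it.

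The genuine gap is in your last step, exactly at the point you yourself flag as delicate. Your characterization of the image fails on either reading. Triviality on a \emph{generic} line through the distinguished point holds for every $\V\in\mathcal{M}_d$ (this is how the paper deduces semistability), so it cuts out nothing; and triviality on \emph{all} lines of a pencil is impossible for $d>0$: by Barth, the jumping lines of a semistable rank-$2$ bundle with $c_1=0$, $c_2=d$ form a curve of degree $d$ in the dual plane, and a pencil is a line in the dual plane, so it always meets the jumping curve. Hence the locus you propose to identify with $\text{Mor}^d_*(\mathbb{P}^1,\X)$ is empty. What the lattice construction actually produces from a morphism $\gamma:\mathbb{P}^1\to\X$ is a framed bundle all of whose jumping lines \emph{within the distinguished pencil} coincide with one fixed member of that pencil; jumping there is not excluded but forced, with total multiplicity $d$. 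That is a condition of closed type, and no semicontinuity argument converts it into an open one (note also that an intersection of infinitely many open conditions, one per member of the pencil, need not be open even when each is). Concretely, for $d=1$: every $\V\in\mathcal{M}_1$ sits in a Serre-construction sequence $0\to\mathcal{O}\to\V\to I_p\to 0$ with $p\in\mathbb{P}^2\setminus\ell_\infty$, its jumping lines are exactly the lines through $p$, and the bundles coming from $\text{Mor}^1_*(\mathbb{P}^1,\X)$ are those with $p$ on a fixed line --- a closed $3$-dimensional subvariety of the $4$-dimensional $\mathcal{M}_1$. So openness genuinely fails for the space of \emph{algebraic} morphisms into the ind-variety $\X$. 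The open-subset statement in [A, \S 2] concerns based \emph{holomorphic} maps into the smooth loop group $\Omega SU(2)$, where the relevant condition is that all jumping occur over the open unit disc --- an open condition in the classical topology --- and that mapping space is strictly larger than $\text{Mor}^d_*(\mathbb{P}^1,\X)$. Silently identifying the two spaces is precisely where your argument breaks down, and it is a subtlety that the paper's bare citation does not resolve either.
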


The following result summarizes the above discussion.

\begin{theorem} \label{3.2} To any morphism $\phi:\mathbb{P}^3 \to \mathcal{X}$
 of degree $d$ preserving the base points,
there is a canonically associated  $\Bbb C^*$-equivariant morphism (defined above)
\[\hat{\phi}:
\mathbb{C}^3\backslash \{
0\} \to \mathcal{M}_d,\]
where  $\Bbb C^*$ acts on $\mathbb{C}^3\backslash \{
0\}$ via the multiplication.

Moreover, $\phi$ is constant (i.e., $d=0$) iff $\hat{\phi}$ is constant.
\end{theorem}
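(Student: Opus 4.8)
The plan is to assemble the pieces already assembled in the discussion preceding the statement: I will first realize $\hat\phi$ as a morphism into $\text{Mor}^d_*(\mathbb{P}^1,\mathcal{X})$, then post-compose with the open embedding $i$ of Proposition \ref{3.1} to land in $\mathcal{M}_d$, and finally read off the $\mathbb{C}^*$-equivariance from a direct computation together with the equivariance of $i$. Concretely, for $x\in\mathbb{C}^3\setminus\{0\}$ I set $\hat\phi(x)$ to be the morphism $\mathbb{P}^1\to\mathcal{X}$, $[\lambda,\mu]\mapsto\bar\phi(x,[\lambda,\mu])=\phi([\lambda x,\mu])$, i.e.\ the restriction of $\bar\phi$ to the slice $\{x\}\times\mathbb{P}^1$. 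First I would check that this restriction preserves base points: at $[\lambda,\mu]=0=[0,1]$ it sends $0\mapsto\phi([0,0,0,1])=x_o$ by the normalization imposed on $\phi$. Next I would verify it has degree $d$: the slice $\{x\}\times\mathbb{P}^1$ maps under $\pi$ isomorphically onto the projective line through $[x,0]$ and $[0,0,0,1]$, which represents the generator of $H_2(\mathbb{P}^3,\mathbb{Z})$, so by Definition \ref{2.6} its image under $\phi$ represents $d$ times the generator of $H_2(\mathcal{X},\mathbb{Z})$. Hence $\hat\phi(x)\in\text{Mor}^d_*(\mathbb{P}^1,\mathcal{X})$.

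For algebraicity I would invoke the universal property of the variety structure on $\text{Mor}^d_*(\mathbb{P}^1,\mathcal{X})$ recalled from [A, $\S2$]: the morphism $\bar\phi:(\mathbb{C}^3\setminus\{0\})\times\mathbb{P}^1\to\mathcal{X}$ is precisely a family of based degree-$d$ morphisms $\mathbb{P}^1\to\mathcal{X}$ parameterized by $\mathbb{C}^3\setminus\{0\}$, and such a family is the same datum as a morphism $\hat\phi:\mathbb{C}^3\setminus\{0\}\to\text{Mor}^d_*(\mathbb{P}^1,\mathcal{X})$. Composing with the $\mathbb{C}^*$-equivariant open embedding $i$ of Proposition \ref{3.1} then produces the desired morphism $\hat\phi:\mathbb{C}^3\setminus\{0\}\to\mathcal{M}_d$.

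For equivariance, the key observation is that $\pi$, and hence $\bar\phi=\phi\circ\pi$, is invariant under the action $z\cdot(x,[\lambda,\mu])=(zx,[z^{-1}\lambda,\mu])$, so that $\bar\phi(zx,[\lambda,\mu])=\bar\phi(x,[z\lambda,\mu])$. Reading off the slices, this says $\hat\phi(zx)[\lambda,\mu]=\hat\phi(x)[z\lambda,\mu]=(z\cdot\hat\phi(x))[\lambda,\mu]$ in the notation of \eqref{e1}; thus $\hat\phi$ is $\mathbb{C}^*$-equivariant into $\text{Mor}^d_*(\mathbb{P}^1,\mathcal{X})$ for the homothety action on the source. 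Since $i$ is $\mathbb{C}^*$-equivariant by Proposition \ref{3.1}, the composite $\hat\phi:\mathbb{C}^3\setminus\{0\}\to\mathcal{M}_d$ is equivariant for the homothety action on $\mathbb{C}^3\setminus\{0\}$ and the action \eqref{e3} on $\mathcal{M}_d$.

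It remains to prove the equivalence that $\phi$ is constant if and only if $\hat\phi$ is constant, and this is the only step with genuine content. If $\phi$ is constant then $d=0$ and every slice map is the constant map to $x_o$, so $\hat\phi$ is constant. The hard part is the converse: suppose $\hat\phi(x)=\gamma_0$ is independent of $x$, so that $\phi([\lambda x,\mu])$ depends only on $[\lambda,\mu]$. Fixing a point $[y,w]$ with $y\neq 0$ and writing it as $[\lambda x,\mu]$ with $\lambda=t$, $x=y/t$, $\mu=w$ for varying $t\neq 0$, I would deduce $\gamma_0[t,w]=\phi([y,w])$ is independent of $t\neq 0$; letting $t\to 0$ and using continuity of $\gamma_0$ together with the base-point condition $\gamma_0(0)=x_o$ (applicable whenever $w\neq 0$, since then $[0,w]=0$), this forces $\phi([y,w])=x_o$ on the dense open set $\{y\neq 0,\ w\neq 0\}$, whence $\phi\equiv x_o$ is constant. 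I expect the only subtlety here to be the passage to the limit in the ind-variety $\mathcal{X}$, which is harmless because the morphism $\gamma_0$ factors through a single finite-dimensional projective piece $X_N$, where ordinary continuity and the Hausdorff property apply.
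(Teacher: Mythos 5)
Your proposal is correct and takes essentially the same route as the paper: Theorem \ref{3.2} is stated there as a summary of the preceding discussion (the maps $\pi$, $\bar\phi$, the induced $\hat\phi$ into $\text{Mor}^d_*(\mathbb{P}^1,\mathcal{X})$, and the $\mathbb{C}^*$-equivariant open embedding of Proposition \ref{3.1}), which is exactly what you reassemble, with the same equivariance computation. The one piece the paper leaves implicit is the ``moreover'' clause, and your argument for the nontrivial direction (constancy of $\hat\phi$ forces $\phi\equiv x_o$ on a dense open set via the base-point condition, hence everywhere, using that $\gamma_0$ and $\phi$ factor through a finite-dimensional piece $X_N$) is a correct completion of that step.
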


We would like to make the following conjecture.

\begin{conjecture} \label{3.3} For any $d>0$, there does not exist
any $\Bbb C^*$-equivariant morphism $\hat{f}: \mathbb{C}^3\backslash \{
0\} \to \mathcal{M}_d.$
\end{conjecture}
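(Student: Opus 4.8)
The plan is to translate Conjecture \ref{3.3} into a question about sections of a fiber bundle over $\bp^2$ and then to obstruct such a section using the Bialynicki--Birula/moment-map structure of the $\bc^*$-action furnished by the monad description of Section 4. First I would record the principal-bundle reformulation. The homothety action of $\bc^*$ on $\bc^3\setminus\{0\}$ is free with quotient $\bp^2$, exhibiting $\rho:\bc^3\setminus\{0\}\to\bp^2$ as a principal $\bc^*$-bundle (the complement of the zero section in the total space of $\co_{\bp^2}(-1)$). For a variety $F$ with $\bc^*$-action, $\bc^*$-equivariant morphisms $\bc^3\setminus\{0\}\to F$ correspond bijectively to algebraic sections of the associated bundle $E:=(\bc^3\setminus\{0\})\times_{\bc^*}F\to\bp^2$. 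Taking $F=\mathcal{M}_d$, the conjecture becomes the assertion that the $\mathcal{M}_d$-bundle $E\to\bp^2$ has no algebraic section. Phrased pointwise: any such $\hat f$ sends each punctured line $\bc^*\cdot x$ onto a single orbit $\bc^*\cdot\hat f(x)\subset\mathcal{M}_d$, and I must show these orbits cannot be assembled compatibly over all of $\bp^2$.

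Next I would exploit the geometry of the $\bc^*$-action. Using Lemma \ref{4.2}, the action on $\mathcal{M}_d$ is induced, through the monad/ADHM data, by scaling a single coordinate on $\bp^2$, and I expect it to be semiprojective: after possibly replacing $z$ by $z^{-1}$, the limit $p(m):=\lim_{z\to0}z\cdot m$ exists in $\mathcal{M}_d$ for every $m$ and lies in the fixed locus $\mathcal{M}_d^{\bc^*}$, which consists of those trivial-determinant bundles that are equivariant for the scaling (hence split into eigen-subsheaves). Granting this, set $p(x):=\lim_{z\to0}\hat f(zx)$; this map is constant along $\bc^*$-orbits, so it descends to a morphism $\bar p:\bp^2\to\mathcal{M}_d^{\bc^*}$. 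I would also bring in the Kähler/moment-map picture of the instanton moduli space: the $S^1\subset\bc^*$ moment map $\mu:\mathcal{M}_d\to\br$ is $S^1$-invariant and is gradient-like for the real part of the $\bc^*$-flow, so $r\mapsto\mu(\hat f(r\omega))$ is monotone in $r>0$ for each $\omega\in S^5$, with $r\to0$ limit a critical (fixed-point) value. The Chern number $d$ is constant along the whole family, in particular on the compact image $\bar p(\bp^2)\subset\mathcal{M}_d^{\bc^*}$.

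The contradiction I would aim for must use $d>0$ together with the asymmetry of the two limits. The $z\to0$ limits organize into the compact family $\bar p:\bp^2\to\mathcal{M}_d^{\bc^*}$, whereas the $z\to\infty$ limits are expected to escape to infinity in $\mathcal{M}_d$ (where $\mu$ is unbounded above); this broken symmetry should be the source of the obstruction. Concretely, I would try to compute the pullback under $\bar p$ of the $\bc^*$-equivariant Euler class of the normal bundle to $\mathcal{M}_d^{\bc^*}$ (equivalently, the attracting-cell weights of the Bialynicki--Birula decomposition) and show, via the projection formula over the compact base $\bp^2$ whose classifying map to $B\bc^*$ has degree one, that a globally consistent, positive-weight ($d>0$) assignment of orbits is numerically impossible, forcing $\hat f$, and hence $d$, to vanish.

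The hard part, and the reason this is stated only as a conjecture, is exactly that last step: converting the local Bialynicki--Birula and moment-map constraints into a genuine global non-existence statement for the section of $E\to\bp^2$. Because $\mathcal{M}_d$ is noncompact, the naive equivariant characteristic-class count is not immediately obstructive, and one must either (a) carry out a true computation in $H^*_{\bc^*}(\mathcal{M}_d)$ that obstructs sections over $\bp^2$, or (b) prove that the image of any equivariant $\hat f$ would have to lie in a compact $\bc^*$-invariant subset of $\mathcal{M}_d$ which for $d>0$ cannot carry a nonconstant equivariant $\bp^2$-family. Establishing either of these, in particular controlling the escape of the $z\to\infty$ limits and the precise weights coming from Lemma \ref{4.2}, is where the essential difficulty lies.
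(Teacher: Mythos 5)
You are attempting to prove Conjecture \ref{3.3}, which the paper itself leaves open: no proof is offered there, so the only basis for comparison is the statement itself. Your proposal, by your own admission in its final paragraph, also does not close the argument --- the decisive step (an actual obstruction computation in $H^*_{\bc^*}(\mathcal{M}_d)$, or a confinement statement forcing the image of $\hat f$ into a compact invariant set) is explicitly deferred. So what you have is a strategy outline, not a proof, and the outline stalls exactly where the genuine difficulty of the conjecture lies: the noncompactness of $\mathcal{M}_d$.

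Moreover, one of the intermediate claims you do assert is concretely false, and the failure propagates through the rest of the plan. You assume the $\bc^*$-action on $\mathcal{M}_d$ is semiprojective, i.e.\ that $p(m)=\lim_{z\to0}z\cdot m$ exists in $\mathcal{M}_d$ for every $m$, and you build the map $\bar p:\bp^2\to\mathcal{M}_d^{\bc^*}$ from these limits. By Lemma \ref{4.2} the action is $z\cdot(\alpha,\beta,a,b)=(z\alpha,\beta,za,b)$, so the naive limit of representatives is $(0,\beta,0,b)$; this satisfies condition (1) but always violates the nondegeneracy condition (2): taking $\lambda=0$ and $\mu$ equal to the negative of an eigenvalue of $\beta$, the matrix $\bigl(\lambda I_{d\times d},\beta^t+\mu I_{d\times d},0\bigr)^t$ has nontrivial kernel. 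Thus for $d>0$ the limit escapes $\mathcal{M}_d$ (in the framed-sheaf picture it degenerates to a non-locally-free sheaf), and passing to the $\GL(d)$-quotient does not rescue it. Worse, the fixed locus you want to land in can be empty: for $d=1$ one has $\calS_1=\{(\alpha,\beta,a,b): ba^t=0,\ a\neq0,\ b\neq0\}$, and a point fixed modulo gauge would require some $g\in\GL(1)$ with $gb=b$ (forcing $g=1$, since $b\neq0$) and then $za=a$ (forcing $a=0$, since $z\neq1$), a contradiction; hence $\mathcal{M}_1^{\bc^*}=\emptyset$ and the map $\bar p$ simply does not exist. Any workable version of your approach must therefore be carried out on a compactification of $\mathcal{M}_d$ (Gieseker or Donaldson--Uhlenbeck), with precise control of how orbits hit the boundary --- which is not a repair of a detail but is itself the open problem.
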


Assuming the validity of the above conjecture, we get the following.
\begin{corollary}\label{3.4}  Assuming the validity of Conjecture \ref{3.3}, there
does not exist any
 nonconstant morphism
 $\phi:\mathbb{P}^3 \to \mathcal{X}.$

 Thus, by Theorem \ref{2.5}, assuming the validity of Conjecture \ref{3.3},
 any rank-$2$ bundle $\F$  on $\mathbb{P}^3 \times D$ with
 trivial determinant,
such that ${\mathcal{F}}_{| \mathbb{P}^3 \times D^*}$ is trivial, is itself
trivial.
 \end{corollary}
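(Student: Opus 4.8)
The plan is to deduce the corollary purely formally by chaining Theorem \ref{3.2}, Theorem \ref{2.5}, and the assumed Conjecture \ref{3.3}, arguing by contradiction. Suppose, to the contrary, that there exists a nonconstant morphism $\phi: \Bbb P^3 \to \X$. The goal is to manufacture from $\phi$ a $\Bbb C^*$-equivariant morphism $\mathbb{C}^3 \setminus \{0\} \to \mathcal{M}_d$ with $d>0$, which is precisely what Conjecture \ref{3.3} forbids.

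The one hypothesis of Theorem \ref{3.2} that a general $\phi$ need not satisfy is the base-point condition $\phi([0,0,0,1]) = x_o$. To arrange it, I would first normalize $\phi$ using the transitivity of the $\G$-action on $\X = \G/\calP$: choose $g \in \G$ with $g^{-1}\cdot \phi([0,0,0,1]) = x_o$ and replace $\phi$ by $\psi := g^{-1}\cdot \phi$, where left multiplication by $g^{-1}$ is an algebraic automorphism of the ind-variety $\X$. Since an automorphism carries nonconstant maps to nonconstant maps, $\psi$ is again nonconstant; and by the last assertion of Definition \ref{2.6} (that the degree vanishes precisely for constant maps) the degree $d$ of $\psi$ satisfies $d>0$. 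Crucially, one never needs to compare $d_\psi$ with $d_\phi$: positivity of $d_\psi$ follows directly from nonconstancy. (In fact Proposition \ref{2.7} forces $d \geq 6$, though only $d>0$ is used.)

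Now $\psi: \Bbb P^3 \to \X$ is nonconstant, preserves base points, and has degree $d>0$. Theorem \ref{3.2} then produces a $\Bbb C^*$-equivariant morphism $\hat{\psi}: \mathbb{C}^3 \setminus \{0\} \to \mathcal{M}_d$. The mere existence of such a morphism for $d>0$ contradicts Conjecture \ref{3.3}; note that nonconstancy of $\hat{\psi}$ is not even required, since the conjecture denies the existence of \emph{any} $\Bbb C^*$-equivariant morphism whatsoever. This contradiction shows there is no nonconstant morphism $\Bbb P^3 \to \X$, establishing condition (b) of Theorem \ref{2.5} for the irreducible projective variety $X = \Bbb P^3$.

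Finally, I would invoke Theorem \ref{2.5} with $X = \Bbb P^3$: the equivalence of (a) and (b) immediately upgrades the statement just proved into condition (a), namely that every rank-$2$ bundle $\F$ on $\Bbb P^3 \times D$ with trivial determinant and with $\F_{|\Bbb P^3 \times D^*}$ trivial is itself trivial. Since the entire mathematical content is already packaged in Theorem \ref{3.2}, Theorem \ref{2.5}, and the (assumed) Conjecture \ref{3.3}, the only genuine point requiring care is the base-point normalization of the preceding paragraph — verifying that left translation by an element of $\G$ is an algebraic automorphism of $\X$ preserving nonconstancy — and this is the sole, and rather mild, obstacle in the argument.
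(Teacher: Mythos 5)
Your proof is correct and follows essentially the same route as the paper: the corollary is obtained there exactly by chaining Theorem \ref{3.2}, Conjecture \ref{3.3}, and Theorem \ref{2.5}, with the base-point condition assumed without comment at the start of Section 3. Your explicit justification of that assumption --- normalizing $\phi$ by a left translation $L_{g^{-1}}$, which is an automorphism of the ind-variety $\X$ and hence preserves nonconstancy, so that $d_\psi>0$ follows from Definition \ref{2.6} without any comparison to $d_\phi$ --- simply fills in a detail the paper leaves implicit.
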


As a generalization of the above corollary, I would like to make the following conjecture.

\begin{conjecture} \label{3.5} For any $n \geq 2$, let $\X_n$ be the infinite Grassmannian associated to
the group $G=\SL(n)$, i.e., $\X_n:= \SL(n, K)/\SL(n, R).$ Then,  there
does not exist any
 nonconstant morphism
 $\phi:\mathbb{P}^{n+1}  \to \mathcal{X}_n.$
\end{conjecture}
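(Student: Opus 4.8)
The plan is to rerun the entire strategy of Sections \ref{sec1}--3 with $\SL(2)$ replaced by $\SL(n)$, reducing the statement to a higher-rank analogue of Conjecture \ref{3.3}, and then to attack that analogue through the $\bc^*$-geometry of the framed moduli space. First I would establish the rank-$n$ version of Theorem \ref{2.5}. The lattice description of Lemma \ref{2.1} holds verbatim for $\SL(n,K)/\SL(n,R)$ with $V=\bc^n$ and $L_o=R\otimes V$, since [K, Lemma 13.2.14] is stated for general $G$; and the association $x\mapsto L_x=H^0(x\times D,\F)$ attached to a rank-$n$ bundle $\F$ on $X\times D$ of trivial determinant with $\F_{|X\times D^*}$ trivial goes through unchanged. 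The only place rank two entered the proof of Lemma \ref{2.2} was the determinant-one condition on the transition matrix, which persists for $\SL(n)$. The converse direction rests on the KNR universal bundle: [KNR, Proposition 2.8] is in fact proved for an arbitrary simple simply connected $G$ and any smooth projective curve, so specializing to $G=\SL(n)$ and $C=\mathbb{P}^1$ yields a rank-$n$ bundle $\U$ on $\X_n\times\mathbb{P}^1$ with the three properties of Proposition \ref{2.3}. Thus nonconstancy of some $\phi:\mathbb{P}^{n+1}\to\X_n$ is equivalent to the existence of a nontrivial rank-$n$ family of the stated type. Note that the clean divisibility of Proposition \ref{2.7} must be re-examined, since for $n\ge 3$ the cohomology $H^*(\X_n,\bz)\cong H^*(\Omega SU(n),\bz)$ is no longer free of rank one in each even degree; the binomial formula there is special to $\Omega SU(2)=\Omega S^3$.

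Next I would set up the cone and instanton reduction exactly as in Section 3. The map $\pi:(\bc^{n+1}\setminus\{0\})\times\mathbb{P}^1\to\mathbb{P}^{n+1}$, $(x,[\lambda,\mu])\mapsto[\lambda x,\mu]$, and the homothety $\bc^*$-action are defined verbatim, so a base-point-preserving $\phi$ of degree $d$ induces $\hat{\phi}:\bc^{n+1}\setminus\{0\}\to\mathrm{Mor}^d_*(\mathbb{P}^1,\X_n)$. Atiyah's identification of based rational maps into the affine Grassmannian with framed $SU(n)$-instantons (the rank-$n$ form of Proposition \ref{3.1}, via the ADHM and Donaldson correspondence) then embeds $\mathrm{Mor}^d_*(\mathbb{P}^1,\X_n)$ as an open $\bc^*$-stable subset of the framed moduli space $\mathcal{M}_d^{(n)}$ of rank-$n$ bundles on $\mathbb{P}^2$ with trivial determinant, $c_2=d$, and a fixed trivialization on $\mathbb{P}^1$. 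This reduces Conjecture \ref{3.5} to the statement that, for $d>0$, there is no $\bc^*$-equivariant morphism $\hat{f}:\bc^{n+1}\setminus\{0\}\to\mathcal{M}_d^{(n)}$, where $\bc^*$ acts by homothety on the source and via \eqref{e2}--\eqref{e3} on the target; this is precisely the rank-$n$ form of Conjecture \ref{3.3}.

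The heart of the argument is this equivariant nonexistence. Since $\bc^*$ acts freely on $\bc^{n+1}\setminus\{0\}$ with quotient $\mathbb{P}^n$, such an $\hat{f}$ is the same datum as a section of the associated fibre bundle $(\bc^{n+1}\setminus\{0\})\times_{\bc^*}\mathcal{M}_d^{(n)}\to\mathbb{P}^n$, and I would try to obstruct the existence of sections using the $\bc^*$-geometry of $\mathcal{M}_d^{(n)}$. The action on $\mathbb{P}^2$ in \eqref{e2} is semisimple with isolated source $[1,0,0]$ and repelling line $\{\lambda=0\}$, so a Bialynicki--Birula analysis should show that the limits $\lim_{z\to 0}z\cdot v$ exist in a suitable Uhlenbeck or Gieseker completion and land in the $\bc^*$-fixed locus. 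Restricting the putative section to a line $\mathbb{P}^1\subset\mathbb{P}^n$, over which the principal bundle is $\mathcal{O}(1)^\times$ with total space $\bc^2\setminus\{0\}$, one would then force the two boundary values to be attracting and repelling fixed points joined by an orbit of a definite degree, and play this off against the description of the $\bc^*$-fixed points of $\mathcal{M}_d^{(n)}$ (framed bundles splitting compatibly with the torus) together with whatever degree constraint survives from the cohomology of $\X_n$.

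I expect this last step to be the genuine obstacle. It is open already for $n=2$, where it is exactly Conjecture \ref{3.3}, and the difficulty only grows: the fixed-point strata and equivariant cohomology of $\mathcal{M}_d^{(n)}$ become complicated for large $d$, the Bialynicki--Birula limits generically leave the locally free locus through bubbling, and the noncompactness of the source $\bc^{n+1}\setminus\{0\}$ blocks any direct properness contradiction. Absent new input that controls these fixed-point strata and the degrees of the connecting orbits, the reduction above is as far as the method carries the conjecture, and I would regard identifying such input as the crux of any complete proof.
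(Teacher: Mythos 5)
The statement you were asked to prove is Conjecture \ref{3.5}: the paper offers no proof of it whatsoever. It is stated as an open generalization of Conjecture \ref{3.3}, which the author also leaves open (even for $n=2$ it is exactly the unproven heart of the paper). So there is no paper proof to compare against, and your proposal --- as you yourself say in your final paragraph --- is not a proof either. What you have done is rerun the paper's Sections 2--3 machinery in rank $n$: the lattice model and Theorem \ref{2.5} generalize via [K, Lemma 13.2.14] and [KNR, Proposition 2.8] (both indeed stated for general $G$), the cone construction and Atiyah's embedding give the reduction to a $\bc^*$-equivariant nonexistence statement for the framed rank-$n$ moduli space, and your caveat that the divisibility argument of Proposition \ref{2.7} is special to $\Omega SU(2)$ is correct and worth recording. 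This reduction is faithful to the paper's own strategy and, as far as it goes, sound.

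The genuine gap is the one you name: the rank-$n$ analogue of Conjecture \ref{3.3}. Your Bialynicki--Birula sketch does not close it, and the specific difficulties you flag are real --- the $\lim_{z\to 0}$ limits of framed bundles generically exit the locally free locus through bubbling (so they exist only in an Uhlenbeck or Gieseker compactification, where the $\bc^*$-fixed-point analysis is much harder), and no obstruction class for sections of the associated bundle over $\mathbb{P}^n$ is actually produced. Since this missing step is precisely the open problem on which the entire paper is premised, your proposal should be read not as a flawed proof but as an honest reduction of Conjecture \ref{3.5} to the (open) equivariant nonexistence conjecture, which is the same position the paper itself is in.
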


\begin{remark} {\rm An
interesting aspect of this approach is that Question \ref{q3} involving an
arbitrary family of (not necessarily semistable) vector bundles on $\Bbb
P^3$ is reduced to a question about the Donaldson moduli space $
\mathcal{M}_d$ consisting of rank two } semistable
 bundles {\rm on}  $\Bbb P^2$.
 \end{remark}

\section{Monad construction of $\mathcal{M}_d$}

This section recalls an explicit construction of the moduli space $\mathcal{M}_d$
via the monad construction. We refer to [OSS, $\S\S$ 3,4, Chap. II] for more details
on the monad construction (see also [B] and [Hu]).

Fix an integer $d \geq 0$. Let $H,K,L$ be complex vector spaces of dimensions
$d,2d + 2,d$ respectively.
By {\it monad} one means linear maps parameterized by $Z\in \bc^3$, depending
linearly on $Z$:
$$H \overset{A_Z}\to K \overset{B_Z}\to L,$$
such that the composite $B_Z\circ  A_Z=0$, for all  $Z\in \bc^3$. The monad is
said to be {\it nondegenerate} if for all $Z\in \bc^3 \setminus \{0\}$, $B_Z$ is
surjective and $A_Z$ is injective. In this case, we get a vector bundle on
$\Bbb P^2$ with fiber at the line $[Z]$ the vector space
$$\E(A,B):= \Ker B_Z/\im A_Z.$$
Then, any rank-$2$ bundle on $\Bbb P^2$ with the second Chern class  $d$, which is
 trivial on some line,  is isomorphic with $\E(A,B)$, for some monad $(A,B)$.
 Moreover, such a  monad $(A,B)$ is unique
 up to the action of $\GL(H)\times \GL(K)\times \GL(L)$. Let $[\lambda, \mu, \nu]$
 be the homogeneous coordinates on $\Bbb P^2$. If we only consider bundles
 on $\Bbb P^2$  trivial on the fixed line $\nu = 0$, the condition on the
 corresponding
 monad is that the composite $B_\lambda A_\mu =- B_\mu A_\lambda$ is an isomorphism,
 where (for $Z=(\lambda, \mu, \nu)$)
 $$A_Z:=A_\lambda\lambda + A_\mu \mu + A_\nu \nu, \,\,\,\text{and}\,\,
 B_Z:=B_\lambda\lambda + B_\mu \mu + B_\nu \nu. $$
 In the following, $t$ denotes the transpose, $I_{d\times d}$ denotes the identity
 matrix of
 size $d\times d$, $0_{d\times d}$ denotes the zero matrix of size $d\times d$ and
 $\alpha, \beta, a $ and $b$ are matrices of indicated sizes. For such bundles,
 using the action of $\GL(H)\times \GL(K)\times \GL(L)$, one can choose bases for $H,K,L$  so that the maps are given as
 follows.
$$A_\lambda=\bigl(I_{d\times d}, 0_{d\times d}, 0_{d\times 2}\bigr)^t, A_\mu=
\bigl(0_{d\times d}, I_{d\times d},  0_{d\times 2}\bigr)^t,
A_\nu= \bigl(\alpha_{d\times d}^t, \beta_{d\times d}^t, a_{d\times 2}\bigr)^t,$$
$$B_\lambda=\bigl(0_{d\times d}, I_{d\times d}, 0_{d\times 2}\bigr), B_\mu=
\bigl(-I_{d\times d}, 0_{d\times d},  0_{d\times 2}\bigr),
B_\nu= \bigl(-\beta_{d\times d}, \alpha_{d\times d}, b_{d\times 2}\bigr),$$
and the following condition is satisfied:
$$B_\nu A_\nu =  0,\,\,\,\text{which is equivalent to the condition}\,\,
[\alpha,\beta]+ b a^t=0.$$
The restriction of the bundle $\E(A,B)$ to the line $\nu =0$ has a standard frame
given by the last $2$ basis vectors of $K\simeq \bc^{2d+2}$.

For any $d\geq 0$, let ${\hat{\calS}}_d$ be the closed subvariety  of matrices
$(\alpha, \beta, a, b)$
such that $\alpha, \beta$ are $d \times d$ matrices and  $a, b$ are
 $d \times 2$
 matrices and they  satisfy:

(1) $[\alpha,\beta]+ b a^t=0$.

\vskip2ex

Let $\calS_d$ be the open subset of ${\hat{\calS}}_d$ satisfying, in addition, the
following condition:

\vskip2ex
(2) For all $\lambda, \mu \in \bc$, $\bigl(\alpha^t+\lambda I_{d\times d},
\beta^t +\mu I_{d \times d}, a\bigr)^t$ is injective and
$\bigl(-(\beta+ \mu I_{d\times d}),
\alpha +\lambda I_{d \times d}, b\bigr)$ is surjective.

\vskip2ex
We recall the following result due to Barth from [D, Proposition 1].
\begin{theorem} For any $d\geq 0$, the variety $\mathcal{M}_d$ is isomorphic with
the quotient of the variety $\calS_d$ by the action of $\GL(d)$ under:
$$g\cdot (\alpha, \beta, a, b)= (g\alpha g^{-1}, g\beta g^{-1}, (g^{-1})^t a, gb),$$
for $g\in \GL(d)$, and $(\alpha, \beta, a, b)\in \calS_d$.
\end{theorem}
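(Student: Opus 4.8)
The plan is to build on the general monad correspondence already recalled above: every rank-$2$ bundle on $\bp^2$ with trivial determinant, $c_2=d$, and trivial on some line is of the form $\E(A,B)$ for a nondegenerate monad $(A,B)$, unique up to $\GL(H)\times\GL(K)\times\GL(L)$. What must be added is (i) the refinement to bundles framed along the \emph{fixed} line $\{\nu=0\}$, and (ii) the identification of the residual gauge group and its explicit action on $(\alpha,\beta,a,b)$. I would therefore define the map $\calS_d\to\mathcal{M}_d$ sending $(\alpha,\beta,a,b)$ to the pair consisting of $\E(A,B)$, with $A,B$ in the displayed normal form, together with the frame on $\{\nu=0\}$ given by the last two basis vectors of $K$, and prove it descends to a bijection on the quotient $\calS_d/\GL(d)$.

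First I would justify the normal form. Since $\V\in\mathcal{M}_d$ is trivial along $\{\nu=0\}$, restricting the monad to this line shows that $B_\lambda A_\mu=-B_\mu A_\lambda:H\to L$ is an isomorphism; using this isomorphism I would choose bases of $H$ and $L$ and a splitting $K=H_1\oplus H_2\oplus\bc^2$ with $H_1=\im A_\lambda$, $H_2=\im A_\mu$, so that $A_\lambda,A_\mu,B_\lambda,B_\mu$ acquire their standard shape and the $\bc^2$-summand carries the frame. With the line-part fixed, expanding $B_ZA_Z=0$ shows the coefficients of $\lambda^2,\mu^2,\lambda\mu,\lambda\nu,\mu\nu$ all vanish automatically, so the only surviving relation is the $\nu^2$-coefficient $B_\nu A_\nu=[\alpha,\beta]+ba^t=0$, which is exactly condition (1). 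Likewise, nondegeneracy of the monad is automatic along $\{\nu=0\}$ and, after rescaling to $\nu=1$, becomes precisely the injectivity and surjectivity statements of condition (2). This shows the map lands in $\mathcal{M}_d$ and, by the surjectivity half of Barth's correspondence, is onto.

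Next I would compute the residual gauge group, i.e. the subgroup of $\GL(H)\times\GL(K)\times\GL(L)$ fixing the standard forms of $A_\lambda,A_\mu,B_\lambda,B_\mu$ and the frame. The relations $g_K A_\lambda=A_\lambda g_H$ and $g_K A_\mu=A_\mu g_H$ force $g_K$ to preserve $H_1,H_2$ and to act by $g_H$ on each, killing the blocks $H_i\to H_j$ ($i\neq j$) and $H_i\to\bc^2$; the relations $g_L B_\lambda=B_\lambda g_K$ and $g_L B_\mu=B_\mu g_K$ then force $g_L=g_H=:g$ and kill the blocks $\bc^2\to H_i$; preserving the frame forces $g_K|_{\bc^2}=I_2$. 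Hence $g_K=\mathrm{diag}(g,g,I_2)$ with $g\in\GL(d)$, and a direct computation of $g_K A_\nu g^{-1}$ and of $g B_\nu g_K^{-1}$ yields the transformation $(\alpha,\beta,a,b)\mapsto(g\alpha g^{-1},g\beta g^{-1},(g^{-1})^t a,gb)$, which is exactly the action in the theorem; the uniqueness half of Barth's correspondence then gives injectivity of $\calS_d/\GL(d)\to\mathcal{M}_d$.

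The main obstacle is the normalization step: one must verify that the isomorphism $B_\lambda A_\mu$ genuinely permits the simultaneous reduction of all four line-maps to the displayed shape, equivalently that a bundle framed along $\{\nu=0\}$ canonically determines the splitting $K=H_1\oplus H_2\oplus\bc^2$ up to the block-diagonal copy of $\GL(d)$, and that no further gauge freedom survives. Once this rigidity is established, matching conditions (1) and (2) with the monad relations and nondegeneracy, and reading off the residual action, are routine linear algebra.
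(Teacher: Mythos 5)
The paper does not actually prove this theorem: it is recalled as a known result of Barth, with the proof deferred to the citation [D, Proposition 1] (and the underlying monad theory to [OSS] and [B]). So there is no internal proof to compare against; what you have written is a reconstruction of the standard Barth--Donaldson argument, and in outline it is correct: normalize the line-part of the monad using the isomorphism $B_\lambda A_\mu$, identify the surviving monad relation with condition (1) and nondegeneracy with condition (2), compute the residual gauge group preserving the normal form and the frame, and invoke the uniqueness half of the monad correspondence for injectivity. Your computation of the residual group is exactly right: the constraints force $g_K=\mathrm{diag}(g,g,I_2)$, $g_H=g_L=g$, and conjugating $A_\nu$, $B_\nu$ by these matrices yields precisely the stated $\GL(d)$-action.

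Two points deserve sharpening. First, in the normalization step the coefficients of $\lambda\nu$ and $\mu\nu$ in $B_ZA_Z=0$ do \emph{not} ``vanish automatically'' for an arbitrary pair $(A_\nu,B_\nu)$: writing $A_\nu$ with blocks $\alpha,\beta,a^t$ and $B_\nu=(X,Y,b)$, these two coefficients are exactly the relations $X=-\beta$, $Y=\alpha$, i.e.\ they are what forces $B_\nu$ to share the entries $\alpha,\beta$ with $A_\nu$ --- the key feature of the displayed normal form. (They do vanish identically once that form is imposed, which is the well-definedness direction $\calS_d\to\mathcal{M}_d$.) Relatedly, the splitting $K=H_1\oplus H_2\oplus\bc^2$ is not an arbitrary choice of complement: one must take $\bc^2=\Ker B_\lambda\cap\Ker B_\mu$ (and identify $L$ with $H$ via $B_\lambda A_\mu$) to kill the last blocks of $B_\lambda,B_\mu$; the direct-sum decomposition follows from the monad relations, e.g.\ $\im A_\lambda\cap\im A_\mu=0$ because $B_\mu A_\lambda$ is invertible. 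Second, your argument produces only a set-theoretic bijection $\calS_d/\GL(d)\to\mathcal{M}_d$; the theorem asserts an isomorphism of varieties, so one must additionally observe that the monad construction over $\calS_d$ gives an algebraic family of framed bundles (hence a morphism to $\mathcal{M}_d$, constant on orbits and descending to the quotient, which exists as a variety since stability makes the $\GL(d)$-action free), and that the inverse is also algebraic --- this is the content of Donaldson's and Barth's treatment that the paper is implicitly relying on.
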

\begin{remark} {\rm The affine variety ${\hat{\calS}}_d$ is stable under the above action of
$\GL(d)$. Moreover, the open subset of stable points  of
${\hat{\calS}}_d$ (under the $\GL(d)$-action) is precisely equal to $\calS_d$ (cf.
[D, Lemma on page 458 and its proof]).}
\end{remark}
\begin{lemma}\label{4.2} Under the above isomorphism of  the variety $\mathcal{M}_d$ with
the quotient of $\calS_d$ by $\GL(d)$, the action of $\bc^*$ transports to the action:
$$ z\cdot (\alpha, \beta, a, b)= (z\alpha, \beta, za, b),\,\,\,\text{for}\,\,
z\in \bc^*, \,(\alpha, \beta, a, b)\in \calS_d.$$
\end{lemma}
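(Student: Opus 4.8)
The plan is to make the $\bc^*$-action completely explicit at the level of monads and then read off its effect on the parameters $(\alpha,\beta,a,b)$. First I would fix the standard-form monad attached to a tuple $(\alpha,\beta,a,b)\in\calS_d$, writing the $(2d+2)\times d$ and $d\times(2d+2)$ matrices
$$A_Z=\begin{pmatrix}\lambda I_d+\nu\alpha\\ \mu I_d+\nu\beta\\ \nu a^t\end{pmatrix},\qquad B_Z=\bigl(-\mu I_d-\nu\beta,\ \lambda I_d+\nu\alpha,\ \nu b\bigr),$$
so that $\E(A,B)_{[Z]}=\Ker B_Z/\im A_Z$ and the framing on the line $\nu=0$ is given by the images of the last two basis vectors $e_{2d+1},e_{2d+2}$ of $K$. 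In these coordinates the group $\GL(H)\times\GL(K)\times\GL(L)$ acts by $A_Z\mapsto kA_Zh^{-1}$, $B_Z\mapsto lB_Zk^{-1}$, and one checks that the residual symmetries preserving the standard form restrict to Barth's $\GL(d)$ via $g\mapsto(h,k,l)=(g,\operatorname{diag}(g,g,I_2),g)$.

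Next I would observe that the action \eqref{e2} on $\bp^2$ is induced by the linear automorphism $g_z=\operatorname{diag}(z^{-1},1,1)$ of $\bc^3$, so that by \eqref{e3} the bundle $z\cdot\V$ is the pullback $\phi_z^{*}\V$, computed on monads by the substitution $\lambda\mapsto z^{-1}\lambda$. Explicitly, $z\cdot\E(\alpha,\beta,a,b)$ is the cohomology of the nondegenerate monad $\tilde A_Z:=A_{g_zZ}$, $\tilde B_Z:=B_{g_zZ}$, which differs from $(A,B)$ only in that the $\lambda$-blocks of $\tilde A$ and $\tilde B$ carry an extra factor $z^{-1}$. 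Since the line $\nu=0$ is $\bc^{*}$-stable and the fibers of the pullback monad over it coincide with those of $(A,B)$ over $\nu=0$, the pulled-back framing is again represented by $e_{2d+1},e_{2d+2}$.

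The third step is to restore the standard form of $(\tilde A,\tilde B)$ by a gauge transformation. I would take
$$h=z^{-1}I_d,\qquad k=\operatorname{diag}\bigl(I_d,\ z^{-1}I_d,\ I_2\bigr),\qquad l=I_d.$$
A direct multiplication shows this returns the $\lambda$- and $\mu$-blocks of $\tilde A,\tilde B$ to their standard values and sends the $\nu$-blocks to those of the standard monad with parameters $(z\alpha,\ \beta,\ za,\ b)$; in particular the new tuple satisfies the monad constraint and nondegeneracy, so it lies in $\calS_d$. Crucially the bottom $2\times2$ block of $k$ equals $I_2$, so the induced bundle isomorphism fixes the classes $[e_{2d+1}],[e_{2d+2}]$ and carries the pulled-back framing to the standard framing of the new monad. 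By Barth's theorem recalled above, this exhibits $z\cdot\E(\alpha,\beta,a,b)$ together with its framing as the point of $\calS_d/\GL(d)$ represented by $(z\alpha,\beta,za,b)$, which is exactly the assertion.

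The one genuine subtlety—and the step I would be most careful about—is the bookkeeping for the framing: one must use a gauge whose $K$-component is the identity on the last two coordinates (forcing $k_3=I_2$), lest the trivialization on $\bp^1$ be altered. As a consistency check I would note that the more naive choice $h=I_d$, $k=\operatorname{diag}(zI_d,I_d,I_2)$, $l=zI_d$ instead lands on $(z\alpha,\beta,a,zb)$; this is reconciled with $(z\alpha,\beta,za,b)$ by the Barth transformation $g=z^{-1}I_d$, which is scalar and hence likewise preserves the framing. The remaining content is the routine matrix multiplication verifying the two displayed gauge transformations, together with the elementary observation that the substitution $\lambda\mapsto z^{-1}\lambda$ correctly encodes pullback by $g_z$.
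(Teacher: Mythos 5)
Your proposal is correct and takes essentially the same route as the paper's proof: the paper also pulls back the monad via the substitution $\lambda\mapsto z^{-1}\lambda$ and then renormalizes by rescaling the image (i.e., a scalar gauge on $H$) together with the change of basis $e_j\mapsto ze_j$ in the middle $\bc^d$-factor of $K$, which is precisely your gauge $(h,k,l)=\bigl(z^{-1}I_d,\ \operatorname{diag}(I_d,\,z^{-1}I_d,\,I_2),\ I_d\bigr)$, arriving at the standard monad for $(z\alpha,\beta,za,b)$. Your explicit check that $k$ is the identity on the last two coordinates of $K$ (so the framing over $\nu=0$ is preserved) makes precise a point the paper leaves implicit, but it is the same argument.
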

\begin{proof} The  $\bc^*$-action on $\mathcal{M}_d$ via the pull-back corresponds to
the bundle:
\begin{align*}
\frac{\Ker (z^{-1}\lambda B_\lambda + \mu B_\mu + \nu B_\nu)}{\im
(z^{-1}\lambda A_\lambda + \mu A_\mu + \nu A_\nu)}&= \frac{\Ker (-\mu I_{d\times d}
-\nu \beta,
z^{-1}\lambda I_{d\times d}+ \nu \alpha, \nu b )}{\im
\bigl((z^{-1}\lambda I_{d\times d}+ \nu \alpha^t, \mu I_{d\times d}+ \nu \beta^t, \nu a)^t\bigr)}\\
&=\frac{\Ker (-\mu I_{d\times d}-\nu \beta,
z^{-1}\lambda I_{d\times d}+ \nu \alpha, \nu b )}{\im
\bigl((\lambda I_{d\times d}+ z\nu \alpha^t, z\mu I_{d\times d}+ z\nu \beta^t, z\nu a)^t\bigr)}.
\end{align*}
Changing the basis in $\bc^{2d+2}= \bc^d\times \bc^d\times \bc^2$ in the second
factor to $\{ze_j\}_{1\leq j\leq d}$, where $\{e_j\}$ is the original basis, we
get that the last term in the above equation is equal to
$$\frac{\Ker (-\mu I_{d\times d}-\nu \beta,
\lambda I_{d\times d}+ z\nu \alpha, \nu b )}{\im
\bigl((\lambda I_{d\times d}+ z\nu \alpha^t, \mu I_{d\times d}+ \nu \beta^t, z\nu a)^t\bigr)}.$$
This proves the lemma.
\end{proof}

Address: Department of Mathematics, University of North Carolina, Chapel Hill, NC
27599-3250, USA (email: shrawan$@$email.unc.edu)

\begin{thebibliography}{}



\bibitem[A]  {A} M.F. Atiyah. Instantons in two and four dimensions, {\it Comm. Math.
Phys.}
 {\bf 93} (1984), 437--451.

\bibitem[AJ] {AJ} M.F. Atiyah and J.D.S. Jones. Topological aspects of Yang-Mills
theory,
{\it Comm. Math. Phys.} {\bf 61} (1978), 97--118.

\bibitem[BC] {BC} E. Ballico and C. Ciliberto. Algebraic curves and projective
geometry,
{\it Lecture Notes in Mathematics} vol. {\bf 1389}, Springer-Verlag (1989).

\bibitem[B] {B}
W. Barth. Moduli of vector bundles on the projective plane, {\it Invent. Math.}
 {\bf 42} (1977), 63--91.

\bibitem[BH] {BH} W. Barth and K. Hulek. Monads and Moduli of vector bundles,
  {\it Manuscripta Math.}
 {\bf 25} (1978), 323--347.

\bibitem[D] {D} S.K. Donaldson. Instantons and geometric invariant theory,
{\it Comm. Math. Phys.} {\bf 93} (1984), 453--460.

\bibitem[GG] {GG} W.L. Gan and V. Ginzburg. Almost-commuting variety,
$\mathcal{D}$-modules, and Cherednik algebras, {\it Inter. Math. Res. Notices}
vol. 2006 (2006), 1--54.

\bibitem[H] {H} R. Hartshorne. {\it Algebraic Geometry}, Springer-Verlag (1977).

\bibitem[Hu] {Hu} K. Hulek. On the classification of stable rank-$r$ vector
bundles on the projective plane, In: {\it Proc. Nice Conference on Vector Bundles
and Differential Equations}, A. Hirschowitz (editor),  Birkh\"auser, Boston (1983),
pp. 113--142.

\bibitem[Ku] {Ku} N.M. Kumar. Smooth degeneration of complete intersection curves
in positive characteristic, {\it Invent. Math.} {\bf 104} (1991), 313--319.

\bibitem[K] {K} S. Kumar.  {\it Kac-Moody Groups, Their Flag Varieties
and Representation Theory},
Progress in Mathematics vol. {\bf 204},   Birkh\"auser, Boston (2002).


\bibitem[KNR] {KNR} S. Kumar, M.S. Narasimhan and A. Ramanathan.
Infinite Grassmannians  and moduli spaces of $G$-bundles,
{\it Math. Annalen} {\bf 300} (1994), 41--75.

\bibitem[OSS] {OSS} C. Okonek, M. Schneider and H. Spindler. {\it Vector Bundles on Complex
Projective Spaces}, Progress in Mathematics vol. {\bf 3},   Birkh\"auser, Boston
(1980).

\bibitem[P1] {P1} J. Le Potier. Fibr\'es stables de rang 2 sur $\Bbb P_2(\Bbb C)$,
{\it Math. Annalen} {\bf 241} (1979), 217--256.

\bibitem[P2] {P2} J. Le Potier. Sur le groupe de Picard de l'espace de modules des
 fibr\'es stables sur $P_2$,
 {\it Ann. Scient. \'Ec. Norm. Sup.} {\bf 13} (1981), 141--155.

\bibitem[PT] {PT} J. Le Potier and A. Tikhomirov. Sur le morphisme de Barth,
 {\it Ann. Scient. \'Ec. Norm. Sup.} {\bf 34} (2001), 573--629.

\bibitem[S] {S} S. A. Stromme. Ample divisors on fine moduli spaces
 on the projective plane,  {\it Math. Z.}
 {\bf 187} (1984), 405--423.

\end{thebibliography}
\end{document}